\newtheorem{theorem}{Theorem}[section]
\newtheorem{lemma}[theorem]{Lemma}
\newtheorem{proposition}[theorem]{Proposition}
\theoremstyle{definition}
\theoremstyle{remark}
\newtheorem{remark}[theorem]{Remark}
\numberwithin{equation}{section}
\newcommand*{\RR}{\mathbb{R}}
\newcommand*{\NN}{\mathbb{N}}
\newcommand*{\calE}{\mathcal{E}}
\newcommand*{\calX}{\mathcal{X}}
\DeclareMathOperator{\EE}{\mathbb{E}} 
\DeclareMathOperator{\PP}{\mathbb{P}} 
\DeclareMathOperator{\Ent}{Ent} 
\DeclareMathOperator{\Var}{Var} 
\DeclarePairedDelimiter{\abs}{\vert}{\vert}
\newcommand*{\indbr}[1]{{\bf 1}_{ \{ #1 \} }}
\title{Concentration bounds for sampling without replacement and Hoeffding statistics}
\author{Bart{\l}omiej Polaczyk}
\address{Institute of Mathematics, University of Warsaw, Poland}
\email{B.Polaczyk@mimuw.edu.pl}
\thanks{Research partially supported by the National Science Centre, Poland, via the Preludium grant no.\ 2020/37/N/ST1/02667.}
\begin{document}

\begin{abstract}
    We prove a Bennett-type concentration bound for suprema of empirical processes based on sampling without replacement and a corresponding bound in the case of an arbitrary Hoeffding statistics.
    We improve on the previous results of such type, providing a sharper concentration profile.
\end{abstract}
\maketitle

\noindent {\bf Keywords}: concentration of measure, sampling, empirical processes, Hoeffding statistics.

\noindent {\bf AMS Classification}: 60E15, 60C05

\section{Preliminaries}
In this short note we investigate concentration properties of particular functionals of uniform random permutations.
Namely, we focus on the suprema of empirical processes when sampling without replacement.
Such processes can be seen as Hoeffding statistics for matrices of a special form with repeated rows.
We also obtain corresponding bounds for a single Hoeffding statistics for general underlying matrix.
Such bounds were considered extensively in the literature, cf., e,g,~\cite{MR3363542,MR3185193,MR1419006}, and they play an important role in various applications, e.g., in transductive learning~\cite{tolstikhin2014localized}, or statistical testing~\cite{mlis2018concentration}.

\subsection{Organization of this paper}
In the rest of this section we introduce some core notation.
In Section~\ref{sec:suprema} we present our results concerning concentration for suprema of empirical processes when sampling without replacement.
In Section~\ref{sec:single-hoeffding} we present analogous results for a single Hoeffding statistic.
We provide remaining proofs of our concentration estimates in Section~\ref{sec:missing-proofs}.
Proofs of auxiliary facts and some additional discussion is moved to Appendix.
\subsection{Basic notation}
For $n\in\NN$, consider the symmetric group $S_n$ of permutations of the set $[n]:=\{1,\ldots,n\}$ equipped with the uniform probability measure $\pi_n$. 
It is the stationary distribution of the \emph{interchange process} defined via its generator $L$ given by the formula
\begin{displaymath}
  L f(\sigma) = 
  \frac{1}{n(n-1)}
  \sum_{i,j = 1}^n 
  \big(f(\sigma \circ \tau_{ij}) - f(\sigma)\big) = 
  \frac{2}{n(n-1)}
  \sum_{1\le i < j \le n}
  \big(f(\sigma \circ \tau_{ij}) - f(\sigma)\big),
\end{displaymath}
where $\tau_{ij}$ stands for the transposition of elements $i$ and $j$.
By $\EE$, we denote the expectation w.r.t. $\pi_n$.
Moreover, for a function $f\colon S_n\to\RR$, denote 
$f_{ij}(\cdot) = f(\cdot\circ\tau_{ij})$ 
for short.
The corresponding Dirichlet form is then expressed as
\begin{align*}
    \calE (f,g) &= 
    \frac{1}{2n(n-1)}
    \EE
    \sum_{i,j=1}^n 
    (f_{ij} - f)
    (g_{ij} - g)
    \\& = 
    \frac{1}{n(n-1)}
    \EE
    \sum_{1\le i < j\le n}
    (g_{ij} - g)
    (f_{ij} - f).
\end{align*}
If $f$ and $g$ have the same monotonicity, then by the reversibility of $L$ we also have
\[
    \calE(f, g)
    =
    \frac{1}{n(n-1)}
    \EE
    \sum_{i,j=1}^n
    (g_{ij} - g)_+
    (f_{ij} - f)_+.
\]

We say that the \emph{modified log-Sobolev} inequality is satisfied with constant $\rho_0>0$ if
\begin{equation}\label{eq:mLS}
    \rho_0
    \Ent_\mu(f)
    \le 
    \calE(f, \log f)
\end{equation}
for all positive functions $f$, where $\Ent_\mu(f) = \int f\log f\,d\mu - \int f\,d\mu \log (\int f\,d\mu)$ is the entropy functional.
For this process, $\rho_0 \ge \frac{1}{n-1}$ was obtained independently by Gao--Quastel~\cite{MR2023890} and Bobkov--Tetali~\cite{MR2283379} (note that the normalization of the generator $L$ differs across various references -- we provide here scaled constants matching our setting).

\section{Sampling without replacement -- concentration for suprema}\label{sec:suprema}
Consider a set of vectors $\calX \subset \RR^n$.
Let $I_1,\ldots, I_n$ be a uniform sample without replacement and $J_1,\ldots, J_n$ be a sample with replacement from the set $[n]$.
For $m\le n$, define 
\begin{equation}\label{eq:Z_def}
    Z = 
    \sup_{x\in\calX} \sum_{k=1}^m x_{I_k},
    \qquad
    Z' = 
    \sup_{x\in\calX} \sum_{k=1}^m x_{J_k}
\end{equation}
so that $Z'$ can be considered a supremum of the empirical process in independent random variables $J_k$.
Tails of $Z'$ have been extensively studied beginning with the work of Talagrand~\cite{MR1419006}.

To analyze the tails of $Z$, it is often convenient to represent it as a supremum of Hoeffding statistics over a family of matrices.
Namely, for $x\in\calX$, denote $a^x\in\RR^{n\times n}$ to be such that the first $m$ rows of $a$ consist of copies of vector $x$ and the remaining rows have zero entries only, i.e., $a_{ij}=x_j$ for $i\le m$, $j\in[n]$ and $a_{ij}=0$ for $i>m$, $j\in[n]$.
Then
\[
  Z = \sup_{x\in\calX}\sum_{k=1}^n a^x_{k\sigma_k},
\]
where $\sigma = (I_1, I_2,\ldots, I_n) \sim \pi_n$.
Moreover, denote $\sigma_{ij}=\sigma\circ \tau_{ij}$ for any $i,j\in[n]$ and
\[
    Z_{ij} = 
    \sup_{x\in\calX}
    \sum_{k=1}^n 
    a^x_{k\sigma_{ij}(k)},
\]
so that the modified log-Sobolev inequality~\eqref{eq:mLS} applied to the Laplace transform of $Z$ reads 
\begin{equation*}
    \Ent(e^{\lambda Z})
    \le 
    \frac{\lambda}{n}
    \EE 
    e^{\lambda Z}
    \sum_{ij}(1-e^{-\lambda(Z-Z_{ij})})_+(Z-Z_{ij})_+.
\end{equation*}
In the sequel, we express our concentration results for $Z$ using the following quantities
\begin{equation*}
    \Sigma^2 
    = 
    \sup_{x\in\calX}
    \sum_{k=1}^m x_{I_k}^2,
    \qquad 
    \widetilde{\Sigma}^2 
    = 
    \sup_{x\in\calX}
    \sum_{k=1}^m x_{J_k}^2.
\end{equation*}
As pointed out in~\cite{gross2010note}, it follows from an argument due to 
Hoeffding~\cite{MR144363} (cf. also~\cite{MR3193733}) that if $E$ is a normed space and 
$g\colon [n]\to E$, then for any convex function
$\Psi\colon E\to \RR$,
\begin{equation}\label{eq:Hoeffding_argument}
    \EE \Psi\Bigl(
        \sum_{k=1}^m g({I_k})
    \Bigr)
    \le
    \EE \Psi\Bigl(
        \sum_{k=1}^m g({J_k})
    \Bigr).
\end{equation}
The meaning of~\eqref{eq:Hoeffding_argument} in terms of $Z$ and $Z'$ and related quantities is explained in the following lemma, which in particular implies that 
$\EE Z \le \EE Z'$ and
$\EE \Sigma^2 \le \EE \widetilde{\Sigma}^2$.
We provide its proof for completeness in Appendix~\ref{sec:hoeffding}.
\begin{lemma}\label{L:hoeffding}
 Let $\phi\colon\RR\to\RR$ be convex and increasing, and let $Z,Z'$ be given by~\eqref{eq:Z_def}.
 Then 
 \[
    \EE \phi(Z) \le \EE \phi(Z').
 \]
\end{lemma}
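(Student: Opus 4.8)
The plan is to derive Lemma~\ref{L:hoeffding} directly from the Hoeffding comparison inequality~\eqref{eq:Hoeffding_argument} by choosing the normed space $E$ and the convex functional $\Psi$ appropriately. The obstacle is that $Z$ is a supremum over $\calX$ of linear functionals of the sample, and $\phi$ is only applied \emph{after} the supremum is taken; a naive application of~\eqref{eq:Hoeffding_argument} with a scalar-valued $g$ would only compare $\phi$ of a single linear statistic $\sum_k x_{I_k}$, not of the supremum. The resolution is to encode the entire family $\calX$ at once by working in a space of functions on $\calX$.

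The concrete plan is as follows. First I would take $E = \ell^\infty(\calX)$, the space of bounded real-valued functions on $\calX$ equipped with the supremum norm, and define $g\colon[n]\to E$ by $g(j) = (x_j)_{x\in\calX}$, i.e.\ $g(j)$ is the vector whose coordinate indexed by $x$ equals the $j$th entry $x_j$. (If $\calX$ is not finite one should restrict to a countable dense subfamily or assume the relevant suprema are attained/measurable, as is standard for empirical processes; I would note this reduction briefly.) With this choice, $\sum_{k=1}^m g(I_k)$ is exactly the element of $E$ whose $x$-coordinate is $\sum_{k=1}^m x_{I_k}$, and likewise for the sample with replacement.

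Next I would define the functional $\Psi\colon E\to\RR$ by $\Psi(u) = \phi\bigl(\sup_{x\in\calX} u(x)\bigr)$. The key point is that $\Psi$ is convex: the map $u\mapsto \sup_{x\in\calX} u(x)$ is convex (a supremum of the linear evaluation functionals $u\mapsto u(x)$) and takes values in $\RR$, and $\phi$ is convex and nondecreasing, so the composition $\phi\circ\sup$ is convex. This is precisely where the hypothesis that $\phi$ is increasing is used — monotonicity is needed to preserve convexity under composition with the convex inner map. With these definitions $\Psi\bigl(\sum_{k=1}^m g(I_k)\bigr) = \phi\bigl(\sup_{x\in\calX}\sum_{k=1}^m x_{I_k}\bigr) = \phi(Z)$ and analogously $\Psi\bigl(\sum_{k=1}^m g(J_k)\bigr) = \phi(Z')$.

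Finally I would invoke~\eqref{eq:Hoeffding_argument} with this $E$, $g$, and $\Psi$ to conclude $\EE\phi(Z) = \EE\Psi\bigl(\sum_k g(I_k)\bigr) \le \EE\Psi\bigl(\sum_k g(J_k)\bigr) = \EE\phi(Z')$, which is the claim. The main obstacle is really just the conceptual step of passing to the infinite-dimensional space $E=\ell^\infty(\calX)$ so that the supremum over $\calX$ becomes a convex operation built into $\Psi$ rather than something applied outside the comparison; once that vectorization is in place, convexity of $\Psi$ and monotonicity of $\phi$ make the argument immediate. The stated consequences $\EE Z\le\EE Z'$ and $\EE\Sigma^2\le\EE\widetilde\Sigma^2$ follow by taking $\phi = \mathrm{id}$ and, for the variance proxy, by the analogous choice $g(j)=(x_j^2)_{x\in\calX}$ with $\phi=\mathrm{id}$.
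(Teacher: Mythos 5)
Your proof is correct and is essentially the paper's own argument: both invoke the Hoeffding comparison~\eqref{eq:Hoeffding_argument} with $\Psi = \phi\bigl(\sup_{x\in\calX}\langle x,\cdot\rangle\bigr)$, deducing convexity of $\Psi$ from the fact that a supremum of linear functionals is convex together with convexity and monotonicity of $\phi$. The only (cosmetic) difference is that the paper takes the finite-dimensional choice $E=\RR^n$ with $g(i)=e_i$, which sidesteps the boundedness and measurability caveats you raise for $E=\ell^\infty(\calX)$; your construction is simply the image of the paper's under the linear map $v\mapsto(\langle x,v\rangle)_{x\in\calX}$.
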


Our main result regarding concentration of $Z$ is the theorem below providing a Bennett-type bound.
\begin{theorem}\label{T:Z-Bennett-general}
    Let $Z$ be given by~\eqref{eq:Z_def} and assume $\calX\subset[-1,1]^n$.
    Then, for some absolute constants $C_1,C_2>0$,
    \[
        \forall\; t\ge 0
        \qquad 
        \PP(Z \ge \EE Z + t)
        \le 
        2\exp\Bigl(
            -\frac{t}{C_1}
            \log\Bigl(
                1 + \frac{t}{C_2\EE \widetilde{\Sigma}^2}
            \Bigr)
        \Bigr),
    \]
    where
    $\widetilde{\Sigma}^2 = \sup_{x\in\calX} \sum_{k=1}^m x_{J_k}^2$.
    One can take $C_1=36$, $C_2=46$.
\end{theorem}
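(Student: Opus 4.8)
The plan is to run the entropy method on the modified log-Sobolev inequality recorded just before the statement. Writing $H(\lambda)=\EE e^{\lambda Z}$ for $\lambda>0$, that inequality reads
\[
    \Ent(e^{\lambda Z}) \le \frac{\lambda}{n}\,\EE\, e^{\lambda Z}\sum_{ij}(1-e^{-\lambda(Z-Z_{ij})})_+(Z-Z_{ij})_+ ,
\]
and the goal is to convert it into a differential inequality for $K(\lambda)=\tfrac1\lambda\log\EE e^{\lambda(Z-\EE Z)}$ of Bennett shape, whose solution bounds the Laplace transform; a Chernoff optimization over $\lambda$ then yields the stated tail. First I would analyze the discrete gradient. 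Since the rows of $a^x$ indexed by $k>m$ vanish and $Z$ depends on $\sigma$ only through the sampled set $\{\sigma_1,\dots,\sigma_m\}$, a transposition $\tau_{ij}$ moves $Z$ only when exactly one of $i,j$ lies in $\{1,\dots,m\}$. Evaluating the supremum at a maximizer $x^\ast=x^\ast(\sigma)$ of $Z$ gives the one-sided bound $Z-Z_{ij}\le x^\ast_{\sigma_i}-x^\ast_{\sigma_j}$ for $i\le m<j$, so $|Z-Z_{ij}|\le 2$ because $\calX\subset[-1,1]^n$; this increment bound feeds the Poissonian part of the profile. Using $(1-e^{-\lambda u})_+\le\lambda u_+$ reduces the bracket to $2\lambda\sum_{i\le m<j}(x^\ast_{\sigma_i}-x^\ast_{\sigma_j})_+^2$.

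The heart of the matter is to bound this quadratic form by the intended proxy. Because differences are invariant under shifts, I would recenter to $u^\ast=x^\ast-\bar{x^\ast}\mathbf 1$ with $\sum_\ell u^\ast_\ell=0$ and expand $\sum_{i\le m<j}(u^\ast_{\sigma_i}-u^\ast_{\sigma_j})^2=(n-m)\sum_{i\le m}(u^\ast_{\sigma_i})^2+m\sum_{j>m}(u^\ast_{\sigma_j})^2+2\bigl(\sum_{i\le m}u^\ast_{\sigma_i}\bigr)^2$. The crucial observation is that for every competitor $w\in\calX$ one has $\tfrac mn\|w\|^2=\EE\sum_{i\le m}w_{J_i}^2\le\EE\widetilde\Sigma^2$, so the out-of-sample energy $\tfrac mn\sum_{j>m}(u^\ast_{\sigma_j})^2$ and the centering contribution $m\bar{x^\ast}^2$ are bounded, pointwise in $\sigma$, by $\EE\widetilde\Sigma^2$, while the in-sample energy $\sum_{i\le m}(x^\ast_{\sigma_i})^2$ is bounded by $\Sigma^2$. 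This collapses the gradient to a combination of $\Sigma^2$, the constant $\EE\widetilde\Sigma^2$, and the self-bounding remainder $\tfrac1n(Z-m\bar{x^\ast})^2$.

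The main obstacle I anticipate is the exponentially weighted term $\EE[e^{\lambda Z}\Sigma^2]$: unlike the out-of-sample and centering pieces it is not controlled pointwise by $\EE\widetilde\Sigma^2$, and $\Sigma^2$ is positively correlated with $e^{\lambda Z}$. I would decouple it through the entropy duality $\EE[e^{\lambda Z}\theta\Sigma^2]\le\Ent(e^{\lambda Z})+\EE[e^{\lambda Z}]\log\EE e^{\theta\Sigma^2}$, absorbing the entropy term against the left-hand side of the log-Sobolev inequality and estimating $\EE e^{\theta\Sigma^2}$ either crudely via $\Sigma^2\le m$ or via the analogous concentration for $\Sigma^2$, and finally invoke Lemma~\ref{L:hoeffding}, in the form $\EE\Sigma^2\le\EE\widetilde\Sigma^2$, to phrase everything through $\EE\widetilde\Sigma^2$. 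The self-bounding remainder $\tfrac1n(Z-m\bar{x^\ast})^2$, carrying the harmless $\tfrac1n$ factor, is absorbed in the same pass at the cost of a constant.

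Putting these estimates together should give a differential inequality of the form $K'(\lambda)\lesssim\EE\widetilde\Sigma^2\,\phi(\lambda)$ with $\phi$ of Bennett type (reflecting the increment bound $2$), whose integration and subsequent Chernoff optimization produce the logarithmic concentration profile. Tracking the numerical factors along the way, namely the $2$ from symmetrizing $\sum_{ij}$ into $\sum_{i<j}$, the increment bound $2$, and the constants coming from the decoupling and the self-bounding step, is what I expect to yield the explicit values $C_1=36$ and $C_2=46$.
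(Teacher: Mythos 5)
Your gradient analysis is essentially correct --- the recentering expansion, the pointwise bounds on the out-of-sample and centering energies, and the absorption of the remainder $\tfrac1n(Z-m\bar{x^\ast})^2$ reproduce, in this special case, the bound $\sum_{ij}(Z-Z_{ij})_+^2\le 8n\bigl(\Sigma^2+\EE\Sigma^2\bigr)$ of Proposition~\ref{P:S-Bernstein-general}, and your entropy-duality decoupling of $\EE[e^{\lambda Z}\Sigma^2]$ is exactly the paper's mechanism. The genuine gap is the final step: a single differential inequality of Bennett shape valid for all $\lambda$ is not reachable by this route, precisely because of the decoupling you flagged as the main obstacle. After estimating $(1-e^{-\lambda u})_+\le\lambda u_+$, the gradient term carries $\lambda^2$, so absorbing $\tfrac{c\lambda^2}{\theta}\Ent(e^{\lambda Z})$ into the left-hand side forces $\theta\gtrsim\lambda^2$; on the other hand $\log\EE e^{\theta\Sigma^2}\lesssim\EE\Sigma^2$ is available only for $\theta$ up to an absolute constant (via the self-bounding Lemma~\ref{L:S-Bernstein-positive}; your crude alternative $\Sigma^2\le m$ gives $\theta m$, which is useless since $m$ can vastly exceed $\EE\widetilde{\Sigma}^2$). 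With $\theta$ bounded you are confined to a bounded interval of $\lambda$, and Chernoff then yields only the Bernstein tail $\exp\bigl(-\min(t/32,\,t^2/128\EE\Sigma^2)\bigr)$ of Proposition~\ref{L:Z-Bernstein-general} --- the linear branch $e^{-ct}$ carries no logarithmic gain. If instead you let $\theta$ grow with $\lambda$ and control $\log\EE e^{\theta\Sigma^2}$ by the Poisson-type bound for nonnegative suprema, you get at best $\Ent(e^{\lambda Z})\lesssim\lambda^2 e^{c\lambda^2}\EE\widetilde{\Sigma}^2\,\EE e^{\lambda Z}$, which integrates to a tail of order $\exp\bigl(-c\,t\sqrt{\log(1+t/\EE\widetilde{\Sigma}^2)}\bigr)$ --- strictly weaker than the claimed profile. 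So the one-pass entropy method saturates below the Bennett exponent.

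The paper closes this gap with a $t$-dependent truncation rather than a sharper differential inequality. For $32t\ge C_1C_2\EE\widetilde{\Sigma}^2$ it splits $Z\le Z^\downarrow+Z^\uparrow$ at level $\rho$ with $\rho^{-1}=\alpha\log\bigl(1+\beta t/\EE\widetilde{\Sigma}^2\bigr)$. The small-entry part is handled by applying the Bernstein bound of Proposition~\ref{L:Z-Bernstein-general} to $Z^\downarrow/\rho$: the rescaling turns the \emph{linear} branch into $t/(32\rho)\asymp t\log(1+\beta t/\EE\widetilde{\Sigma}^2)$, i.e.\ the logarithm enters through the truncation level, not through the Laplace-transform estimate. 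The large-entry part $Z^\uparrow$ is a supremum over \emph{nonnegative} vectors, and there the self-bounding structure ($\sum_k(Z'-Z'_k)^2\le Z'$) closes the differential inequality with no decoupling at all, giving the genuinely Poissonian bound $\log\EE e^{\lambda Z'_\rho}\le\frac1{16}e^{8\lambda}\EE Z'_\rho$ for all $\lambda\ge 1/4$ (Lemma~\ref{T:Z-Bennett-positive}, transferred from sampling with replacement via Lemma~\ref{L:hoeffding}); since $\EE Z'_\rho\le\rho^{-1}\EE\widetilde{\Sigma}^2$, the Chernoff bound at $\lambda^\ast=\frac18\log\bigl(1+\beta t/\EE\widetilde{\Sigma}^2\bigr)$ yields the Bennett exponent for $Z^\uparrow$. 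The mean shift $\abs{\EE Z^\downarrow-\EE Z}\le\EE Z^\uparrow\le\alpha\beta t$ is then absorbed, and the two tails are combined --- which is also where the prefactor $2$ in the statement comes from, something a single-exponential argument like yours would not produce. If you want to salvage your write-up, keep your gradient computation and decoupling as the proof of the Bernstein step, and add the truncation layer on top of it.
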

\begin{remark}\label{R:TBK-comparision}
    Assume that 
    $
        \calX 
        \subset 
        \{\, 
            x\in [-1,1]^n\colon \sum_i x_i = 0 
        \,\}
    $
    and denote $v=m \sup_{x\in\calX}\Var(x_{J_1}) + 2\EE Z'$.
    Then, Tolstikhin--Blanchard--Kloft~\cite[Theorem 2]{tolstikhin2014localized} proved that
    \begin{equation}\label{eq:Tolstikhin}
        \forall\;t\ge0\qquad
        \PP(Z \ge \EE Z' + t)
        \le 
        \exp\Bigl(
            -
            t
            \log\bigl( 
                1+\frac{t}{v}
            \bigr)
            +
            t 
            -
            v
            \log\bigl( 
                1+\frac{t}{v}
            \bigr)
        \Bigr).
    \end{equation}
    Recall that by Hoeffding's argument~\eqref{eq:Hoeffding_argument}, cf. Lemma~\ref{L:hoeffding}, $\EE Z \le \EE Z'$ and in many situations the latter quantity can be significantly larger.
    Using symmetrization and Talagrand's contraction principle for Rademacher averages, cf., e.g.,~\cite{MR1102015}, we can estimate 
    \[
        \EE\widetilde{\Sigma}^2 
        \le 
        m\sup_{x\in\calX} \Var(x_{J_1}) + 
        8 \EE \sup_{x\in\calX} \sum_{k=1}^m \varepsilon_k x_{J_k},
    \]
    where $\varepsilon_1,\ldots,\varepsilon_m$ are i.i.d. Rademacher variables independent of $J_1,\ldots,J_m$.
    Thus, in the case when the set $\calX$ is symmetric with respect to the origin we obtain that 
    \[ 
        \EE\widetilde{\Sigma}^2  
        \le
        m\sup_{x\in\calX} \Var(x_{J_1}) + 
        16 \EE Z'
        \le 
        8v
    \]
    and consequently our estimate of Theorem~\ref{T:Z-Bennett-general}, in contrast to~\eqref{eq:Tolstikhin}, provides a bound on deviations around the "proper" mean, while having no worse scaling behavior in the exponent (up to numerical constants).

    In the general case however, it does not need to hold that $\EE \widetilde{\Sigma}^2 = \mathcal{O}(v)$, whence the bound~\eqref{eq:Tolstikhin} and our bound of Theorem~\ref{T:Z-Bennett-general} are not directly comparable. 
    It is also worth noting that Authors of~\cite{tolstikhin2014localized} provide a bound $\EE Z' \le \EE Z+2\frac{m^3}{n}$ which shows that one can replace $\EE Z'$ with $\EE Z$ under the probability estimate without losing much for small values of $m$.
    In Appendix~\ref{sec:example}, we provide an example illustrating a situation in which our estimate still improves upon~\eqref{eq:Tolstikhin} in such a general case of non-symmetric set~$\calX$.
\end{remark}
To prove the Bennett-type inequality of Theorem~\ref{T:Z-Bennett-general}, we need the following estimate due to Ledoux~\cite{MR1399224}.
We provide the proof for completeness in Appendix~\ref{sec:Ledoux-pf}.

\begin{lemma}[Proof of Theorem 2.4 in \cite{MR1399224}]\label{T:Z-Bennett-positive}
    Let $Z'$ be given by~\eqref{eq:Z_def} and assume $\calX\subset[0,1]^n$.
    Then 
    \begin{equation*}
        \forall\; \lambda\ge 1/4 
        \qquad 
        \log \EE e^{\lambda Z'}
        \le 
        \frac{1}{16}
        e^{8\lambda}
        \EE Z'.
    \end{equation*}
\end{lemma}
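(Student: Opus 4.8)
The plan is to run the entropy (Herbst) method directly on the product measure governing $Z'$, using crucially that $\calX\subset[0,1]^n$ so that $Z'$ is a sum of nonnegative contributions with a self-bounding structure. View $Z'=\sup_{x\in\calX}\sum_{k=1}^m x_{J_k}$ as a function of the independent coordinates $J_1,\dots,J_m$, and for each $i$ put $Z'_i=\sup_{x\in\calX}\sum_{k\ne i}x_{J_k}$, which does not depend on $J_i$. First I would record the two geometric facts that drive everything: if $x^\ast$ attains the supremum defining $Z'$, then $Z'_i\ge\sum_{k\ne i}x^\ast_{J_k}=Z'-x^\ast_{J_i}$, so that $0\le Z'-Z'_i\le x^\ast_{J_i}\le 1$, and, summing over $i$, $\sum_{i=1}^m(Z'-Z'_i)\le\sum_{i=1}^m x^\ast_{J_i}=Z'$.

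Next I would apply the tensorization of entropy for product measures, $\Ent(e^{\lambda Z'})\le\sum_{i=1}^m\EE\,\Ent_i(e^{\lambda Z'})$, where $\Ent_i$ is the entropy in the $i$-th coordinate. For the per-coordinate term I would invoke the elementary inequality $\Ent(Y)\le\EE[Y\log(Y/c)]-\EE Y+c$, valid for any positive \emph{constant} $c$ (minimized at $c=\EE Y$), applied conditionally on the remaining coordinates with the choice $c=e^{\lambda Z'_i}$, which is legitimate precisely because $Z'_i$ is constant in the $i$-th coordinate. Writing $d_i=Z'-Z'_i\in[0,1]$ and combining the resulting terms gives $\Ent_i(e^{\lambda Z'})\le\EE_i\bigl[e^{\lambda Z'}\,h(\lambda d_i)\bigr]$ with $h(s)=e^{-s}-1+s\ge 0$. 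Since $h$ is convex with $h(0)=0$, I get $h(\lambda d_i)\le d_i\,h(\lambda)$, hence $\Ent_i(e^{\lambda Z'})\le(e^{-\lambda}-1+\lambda)\,\EE_i[e^{\lambda Z'}d_i]$. Summing, taking full expectation, and using $\sum_i d_i\le Z'$ yields the single closed inequality $\Ent(e^{\lambda Z'})\le(e^{-\lambda}-1+\lambda)\,\EE[Z'e^{\lambda Z'}]$.

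From here I would pass to a differential inequality for $G(\lambda)=\EE e^{\lambda Z'}$. Using $\Ent(e^{\lambda Z'})=\lambda G'(\lambda)-G(\lambda)\log G(\lambda)$ and $G'(\lambda)=\EE[Z'e^{\lambda Z'}]$, the bound reduces to $(1-e^{-\lambda})G'(\lambda)\le G(\lambda)\log G(\lambda)$, i.e. $L'(\lambda)/L(\lambda)\le(1-e^{-\lambda})^{-1}$ for $L=\log G$. Integrating from $0^{+}$, where $L(\lambda_0)/(e^{\lambda_0}-1)\to L'(0)=\EE Z'$, produces the Poisson-type bound $\log\EE e^{\lambda Z'}\le(e^{\lambda}-1)\,\EE Z'$ for all $\lambda\ge 0$. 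The claimed estimate then follows from the elementary inequality $e^{\lambda}-1\le\tfrac1{16}e^{8\lambda}$ for $\lambda\ge1/4$ (check it at $\lambda=1/4$ and note the right-hand side grows faster, since $\tfrac12 e^{8\lambda}\ge e^{\lambda}$ there), giving $\log\EE e^{\lambda Z'}\le\tfrac1{16}e^{8\lambda}\EE Z'$.

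I expect the main obstacle to be the per-coordinate entropy estimate: one must pick the reference $Z'_i$, apply the constant-reference entropy inequality conditionally, and massage the terms into the clean nonnegative convex profile $h$, after which the reduction $h(\lambda d_i)\le d_i h(\lambda)$ together with the self-bounding inequality $\sum_i d_i\le Z'$ is exactly what collapses the right-hand side to $\EE[Z'e^{\lambda Z'}]$. A secondary technical point is justifying the integration of the differential inequality down to $\lambda=0$ via $L'(0)=\EE Z'$; this is legitimate because $Z'$ is bounded (by $m$), so $G$ is smooth and $L(\lambda)=\lambda\EE Z'+o(\lambda)$. Alternatively, one may reproduce Ledoux's original logarithmic Sobolev computation verbatim, which yields the stated form with the constants $8$ and $\tfrac1{16}$ directly.
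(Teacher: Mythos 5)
Your proof is correct, and after a shared opening it completes the argument by a genuinely different---and in fact sharper---route than the paper. Both arguments begin identically: tensorization of entropy over the independent coordinates $J_1,\dots,J_m$, the variational entropy bound applied conditionally with the reference $c_k=e^{\lambda Z'_k}$, and the self-bounding facts $0\le Z'-Z'_k\le 1$ and $\sum_k(Z'-Z'_k)\le Z'$, arriving at $\Ent(e^{\lambda Z'})\le \EE\bigl[e^{\lambda Z'}\sum_k\phi(-\lambda(Z'-Z'_k))\bigr]$ with $\phi(z)=e^z-z-1$ (your $h(s)=\phi(-s)$). The paper then linearizes via $\phi(-\lambda z)\le(\lambda-\tfrac18)z$ for $z\in[0,1]$, which is only available for $\lambda\ge 1/4$; this produces the differential inequality $\frac{d}{d\lambda}\log\EE e^{\lambda Z'}\le 8\log\EE e^{\lambda Z'}$ on $[1/4,\infty)$, and integrating it from $\lambda=1/4$ forces a second, separate step: a Bernstein-type control of the boundary value $\log\EE e^{Z'/4}$ using $\sum_k(Z'-Z'_k)^2\le Z'$, $\phi(-z)\le z^2/2$ and Proposition~\ref{P:Herbst_Bernstein} with $a=\tfrac12$, $b=0$. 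You instead exploit convexity of $\phi$ through $\phi(-\lambda d)\le d\,\phi(-\lambda)$ for $d\in[0,1]$, which yields the single closed inequality $(1-e^{-\lambda})F'(\lambda)\le F(\lambda)\log F(\lambda)$ for \emph{all} $\lambda\ge 0$; integrating (equivalently, observing that $\log F(\lambda)/(e^{\lambda}-1)$ is nonincreasing with limit $\EE Z'$ at $0^+$) gives the optimal Poisson-type bound $\log\EE e^{\lambda Z'}\le(e^{\lambda}-1)\EE Z'$, of which the stated lemma is a weakening, since $e^{\lambda}-1\le\tfrac1{16}e^{8\lambda}$ on $[1/4,\infty)$ (your check is valid: the inequality holds at $\lambda=1/4$, where $e^{1/4}-1<\tfrac{1}{16}e^{2}$, and the right side's derivative $\tfrac12 e^{8\lambda}$ dominates $e^{\lambda}$ there). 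So your completion buys both simplicity---no two-regime split and no auxiliary Herbst proposition---and strength, recovering the classical self-bounding Laplace bound valid down to $\lambda=0$, while the paper's version is tailored to deliver exactly the constants $8$ and $\tfrac1{16}$ it needs later. Two small points to make explicit in a polished write-up: assume w.l.o.g.\ that $\calX$ is finite (or pass to near-maximizers) so that the maximizer $x^\ast$ exists, exactly as the paper does in the proof of Lemma~\ref{L:S-Bernstein-positive}; and note that $0\le Z'\le m$, so $F$ is smooth and $\log F(\lambda)>0$ for $\lambda>0$ unless $Z'$ is a.s.\ constant, a degenerate case in which $\log F(\lambda)=\lambda Z'\le(e^{\lambda}-1)\EE Z'$ holds directly.
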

We also need the following proposition providing the Bernstein inequality for $Z$.
We defer its proof to Section~\ref{sec:missing-proofs}.
\begin{proposition}\label{L:Z-Bernstein-general}
    Let $Z$ be given by~\eqref{eq:Z_def} and assume
    $\calX \subset [-1,1]^n$.
    Then
    \[
        \forall\; t\ge 0
        \qquad 
        \PP(Z \ge \EE Z + t)
        \le 
        \exp\Bigl(
            -\min\Bigl(
                \frac{t}{32},
                \frac{t^2}{128 \EE {\Sigma}^2}
            \Bigr)
        \Bigr),
    \]
    where 
    ${\Sigma}^2 = \sup_{x\in\calX} \sum_k x_{I_k}^2$.
\end{proposition}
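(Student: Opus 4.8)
The plan is to run the Herbst argument on the modified log-Sobolev inequality~\eqref{eq:mLS}, in exactly the form already recorded for the Laplace transform of $Z$. Writing $H(\lambda)=\EE e^{\lambda Z}$ and $v_{ij}=(Z-Z_{ij})_+$, the starting point is
\[
  \Ent(e^{\lambda Z})
  \le
  \frac{\lambda}{n}\,\EE\Bigl[e^{\lambda Z}\sum_{ij}\bigl(1-e^{-\lambda v_{ij}}\bigr)v_{ij}\Bigr].
\]
First I would localize which transpositions contribute. If both $i,j\le m$ or both $i,j>m$, then $\tau_{ij}$ only permutes coordinates inside, respectively outside, the sampled block, so $Z_{ij}=Z$ and $v_{ij}=0$; hence only the swaps with exactly one index in $\{1,\dots,m\}$ survive. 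For such a pair, say $i\le m<j$, testing the supremum defining $Z_{ij}$ against the maximizer $x^\ast$ of $Z$ gives $Z-Z_{ij}\le x^\ast_{\sigma_i}-x^\ast_{\sigma_j}$, so that pointwise
\[
  0\le v_{ij}\le (x^\ast_{\sigma_i}-x^\ast_{\sigma_j})_+\le 2 .
\]
These two facts — a clean bounded difference $v_{ij}\le 2$ and the quadratic control by the maximizer — are what drive the two regimes of the bound.

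Next I would feed in the elementary estimates $(1-e^{-\lambda v})v\le \lambda v^2$ and $(1-e^{-\lambda v})v\le v$. The first converts the carr\'e-du-champ into a second-moment quantity, producing an extra factor $\lambda$ and hence, after integration, the Gaussian regime $t^2/(128\,\EE\Sigma^2)$; the second, combined with $v_{ij}\le 2$, gives the linear-in-$\lambda$ control responsible for the exponential regime $t/32$. Concretely, using the first estimate one bounds $\tfrac1n\sum_{ij}(1-e^{-\lambda v_{ij}})v_{ij}\le \tfrac{\lambda}{n}\sum_{ij}v_{ij}^2$, and the task reduces to comparing $\tfrac1n\sum_{ij}v_{ij}^2$ with $\Sigma^2=\sup_{x\in\calX}\sum_{k\le m}x_{\sigma_k}^2$.

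The hard part will be precisely this comparison. Splitting $(x^\ast_{\sigma_i}-x^\ast_{\sigma_j})_+^2\le 2(x^\ast_{\sigma_i})^2+2(x^\ast_{\sigma_j})^2$ and summing, the terms carrying the \emph{sampled} coordinate are harmless — they are at most $4\Sigma^2$ after dividing by $n$ — but the remaining terms produce $\tfrac{m}{n}\sum_{j>m}(x^\ast_{\sigma_j})^2$, i.e. the mass that the maximizer $x^\ast$ places on the \emph{unsampled} coordinates. This term genuinely cannot be dominated pointwise by $\Sigma^2$: if $\calX$ is a single vector with many very negative entries and the sample happens to avoid them, then $\Sigma^2$ may vanish while $\tfrac1n\sum_{ij}v_{ij}^2$ stays of order $\EE\Sigma^2$. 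The whole point is therefore that this contribution is controlled only \emph{in expectation}, and the delicate step I expect is to prove an estimate of the shape
\[
  \EE\Bigl[e^{\lambda Z}\,\tfrac{m}{n}\sum_{j>m}(x^\ast_{\sigma_j})^2\Bigr]
  \le
  C\,\EE\Sigma^2\cdot\EE e^{\lambda Z}.
\]
I would attempt this by dominating the unsampled sum by the supremum $\sup_{x\in\calX}\sum_{j>m}x_{\sigma_j}^2$ over the complementary sample and then decoupling it from the weight $e^{\lambda Z}$ via the exchangeability of $\sigma$ between the two blocks (reversibility of $L$ together with the representations of $\Sigma^2$ and $\widetilde\Sigma^2$), so that its weighted expectation collapses to a constant multiple of $\EE\Sigma^2$.

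With such a bound in hand the remainder is routine. The resulting differential inequality, of the form $\Ent(e^{\lambda Z})\le a\lambda^2\,\EE\Sigma^2\,H(\lambda)$ on the range where the quadratic term dominates, integrates through the standard Herbst computation $\frac{d}{d\lambda}\bigl(\tfrac1\lambda\log H(\lambda)\bigr)\le a\,\EE\Sigma^2$ to give $\log\EE e^{\lambda(Z-\EE Z)}\le a\lambda^2\,\EE\Sigma^2$; on the complementary range I would instead retain the bound $(1-e^{-\lambda v_{ij}})v_{ij}\le v_{ij}\le 2$ to extract a purely linear contribution in $\lambda$. A Chernoff optimization over $\lambda$, splitting at $t\asymp\EE\Sigma^2$, then yields $\exp\bigl(-\min(t/32,\ t^2/(128\,\EE\Sigma^2))\bigr)$, with the explicit constants produced by careful bookkeeping of the numerical factors accumulated above.
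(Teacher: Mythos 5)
Your outline follows the paper's broad strategy (modified log-Sobolev plus a Herbst argument, with the comparison of the carr\'e-du-champ to $\Sigma^2$ as the pivot), but the difficulty is located in exactly the wrong place, and the step you actually skip is the one that fails. First, the term you single out as the delicate step — the mass $\frac{m}{n}\sum_{j>m}(x^\ast_{\sigma_j})^2$ of the maximizer on the unsampled coordinates — needs no decoupling at all: pointwise, for every realization of $(\sigma, x^\ast)$,
\[
    \frac{m}{n}\sum_{j>m}(x^\ast_{\sigma_j})^2
    \le
    \frac{m}{n}\sum_{j=1}^{n}(x^\ast_j)^2
    =
    \EE_{\sigma'}\sum_{k=1}^{m}(x^\ast_{\sigma'_k})^2
    \le
    \EE\Sigma^2,
\]
where $\sigma'$ is an independent uniform permutation, so the cross term is dominated \emph{surely} by the deterministic constant $\EE\Sigma^2$ — which is all the statement requires. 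This is precisely the paper's move in the proof of Proposition~\ref{P:S-Bernstein-general}, where $\sum_{ij}\hat{r}_{i\sigma_j}^2=\sum_{ij}\hat{r}_{ij}^2\le n\EE\Sigma_R^2$ holds pointwise. Your proposed substitute — decoupling from $e^{\lambda Z}$ "via exchangeability of $\sigma$ between the two blocks" — would not work as stated anyway, since exchangeability gives no product bound $\EE[fg]\lesssim \EE f\,\EE g$ for two functionals of the same permutation.

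The genuine gap is the term you call harmless. After your estimates the entropy inequality reads $\Ent(e^{\lambda Z})\le C\lambda^2\bigl(\EE[e^{\lambda Z}\Sigma^2]+\EE\Sigma^2\,\EE e^{\lambda Z}\bigr)$, and your final paragraph silently replaces $\EE[e^{\lambda Z}\Sigma^2]$ by $\EE\Sigma^2\,\EE e^{\lambda Z}$ to reach the differential inequality $\Ent(e^{\lambda Z})\le a\lambda^2\,\EE\Sigma^2\,\EE e^{\lambda Z}$. That replacement is unjustified: $\Sigma^2$ and $Z$ are functions of the same $\sigma$ and are, if anything, positively correlated, so the bound fails in exactly the direction you need. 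The paper closes this hole with two additional ingredients: the duality~\eqref{eq:entropy_variational2}, $\EE[hg]\le\Ent(h)+(\EE h)\log\EE e^{g}$, applied with $h=e^{\lambda Z}$ and $g=\Sigma^2/4$, together with the MGF estimate $\log\EE e^{\Sigma^2/4}\le\frac12\EE\Sigma^2$ from Lemma~\ref{L:S-Bernstein-positive} (itself an mLSI/Herbst bound, applicable because $\Sigma^2$ is again a supremum of Hoeffding statistics with entries $x_j^2\in[0,1]$). The resulting entropy term is absorbed into the left-hand side, giving $(1-32\lambda^2)\Ent(e^{\lambda Z})\le 24\lambda^2\,\EE\Sigma^2\,\EE e^{\lambda Z}$, valid only for $\lambda\le 1/16$ — and this restricted range, via Proposition~\ref{P:Herbst_Bernstein2}, is the true source of the linear regime $t/32$. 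Your alternative mechanism for that regime, retaining $(1-e^{-\lambda v_{ij}})v_{ij}\le v_{ij}\le 2$, is quantitatively inadequate: summed over the roughly $2m(n-m)$ contributing pairs it yields $\Ent(e^{\lambda Z})\lesssim\lambda m\,\EE e^{\lambda Z}$, a bound scaling with $m$ rather than $\EE\Sigma^2$, which cannot produce the claimed tail. The block localization and the pointwise bounds $v_{ij}\le(x^\ast_{\sigma_i}-x^\ast_{\sigma_j})_+\le 2$ in your first steps are correct and consistent with the paper's $\ell^2$ triangle-inequality step, but without the decoupling of $\EE[e^{\lambda Z}\Sigma^2]$ the proof does not close.
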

\begin{proof}[Proof of Theorem~\ref{T:Z-Bennett-general}]
    If 
    $32t < C_1C_2\EE\widetilde{\Sigma}^2$, 
    then we apply Proposition~\ref{L:Z-Bernstein-general} and estimate 
    $\log(1+x)\le x$ 
    to get that as long as 
    $128 \le C_1C_2$,
    \begin{align*}
        \begin{split}
        \PP(Z\ge \EE Z + t)
        &\le 
        \exp\Bigl(
            -\min\Bigl(
                \frac{t}{32},
                \frac{t^2}{128 \EE \widetilde{\Sigma}^2}
            \Bigr)
        \Bigr),
        \\&\le 
        \exp\Bigl(
            -\min\Bigl(
                \frac{t}{32 },
                \frac{t^2}{C_1C_2 \EE \widetilde{\Sigma}^2}
            \Bigr)
        \Bigr)
        \\&=
        \exp\Bigl(
          -\frac{t^2}{C_1C_2\EE\widetilde{\Sigma}^2}
        \Bigr)
        \le 
        \exp\Bigl(
            -\frac{t}{C_1}
            \log\Bigl(
                1 + \frac{t}{C_2\EE \widetilde{\Sigma}^2}
            \Bigr)
        \Bigr)
        \end{split}
    \end{align*}
    and the result follows in this case.

    If $32t \ge C_1C_2\EE\widetilde{\Sigma}^2$, then set 
    \[
        \rho^{-1}
        =
        \alpha
        \log\bigl(
            1+\beta\frac{ t}{\EE \widetilde{\Sigma}^2}
        \bigr)
    \] 
    for some $\alpha, \beta>0$ (to be fixed later)
    and denote 
    \[
        Z^\downarrow
        = 
        \sup_{x\in\calX}
        \sum_{k=1}^m
        x_{I_k}
        \indbr{\abs{x_{I_k}}\le \rho}
    \]  
    and 
    \[
        Z^\uparrow
        =
        \sup_{x\in\calX}
        \sum_{k=1}^m
        \abs{x_{I_k}}
        \indbr{\abs{x_{I_k}} > \rho}
    \]
    so that $Z\le Z^\downarrow + Z^\uparrow$.
    We estimate the tail probabilities for $Z^\downarrow$ and $Z^\uparrow$.

    By the estimate 
    $\log(1+x)\le x$, 
    by the definition of $\rho$ and as long as
    $\alpha\beta \le 1/4$,
    \begin{equation*}
        \frac{t}{32\rho}
        \le 
        \alpha\beta
        \cdot
        \frac{t^2}{32\EE \widetilde{\Sigma}^2}
        \le 
        \frac{t^2}{128\EE \widetilde{\Sigma}^2}
        \le
        \frac{t^2}{128\EE {\Sigma}^2},
    \end{equation*}
    whence, by Proposition~\ref{L:Z-Bernstein-general} applied to $Z^\downarrow/\rho$,
    \begin{align}\label{eq:Z-downarrow-tail}
        \begin{split}
        \PP(Z^\downarrow \ge \EE Z^\downarrow + t)
        &\le 
        \exp\Bigl(
            -\min\Bigl(
                \frac{t}{32\rho},
                \frac{t^2}{128\EE {\Sigma}^2}
            \Bigr)
        \Bigr)
        \\&=
        \exp\Bigl(
            -\frac{t}{32\rho}
        \Bigr)
        =
        \exp\Bigl(
            -
            \frac{\alpha t}{32}
            \log\bigl(
                1+\beta\frac{t}{\EE \widetilde{\Sigma}^2}
            \bigr)
        \Bigr).
        \end{split}
    \end{align}
    
    We turn to the tails of $Z^\uparrow$.
    Denote
    \[
        Z'_\rho
        =
        \sup_{x\in\calX}
        \sum_{k=1}^m
        \abs{x_{J_k}}
        \indbr{\abs{x_{J_k}} > \rho}.
    \]
    Lemma~\ref{L:hoeffding} applied with 
    $\{\, 
        (\abs{x_i}\indbr{\abs{x_i}>\rho})_{i=1}^n
        \;\colon\;
        x\in\calX
    \,\}$ 
    in place of $\calX$
    together with Lemma~\ref{T:Z-Bennett-positive} applied to~$Z'_\rho$ yield
    \begin{align}\label{eq:Z-Bennett-positive-cor}
        \begin{split}
        \log\EE e^{\lambda Z^\uparrow}
        \le  
        \log \EE e^{\lambda Z'_\rho}
        \le
        \frac{1}{16}
        e^{8\lambda}
        \EE Z'_\rho
        \end{split}
    \end{align}
    for all $\lambda\ge 1/4$.
    Choose 
    \begin{equation*}
        \lambda^\ast 
        =
        \frac{1}{8}
        \log\Bigl(
            1+
            \beta
            \frac{t}{\EE \widetilde{\Sigma}^2}
        \Bigr).
    \end{equation*}
    Since $32t\ge C_1C_2\EE\widetilde{\Sigma}^2$ by assumption, 
    then
    $\lambda^\ast \ge \frac{1}{8}\log(1+\frac{\beta C_1C_2}{32}) \ge \frac{1}{4}$,
    as long as $\beta C_1C_2\ge 32(e^2-1)$.
    Moreover, note that 
    \begin{equation*}
        \EE Z_\rho'
        \le 
        \rho^{-1} 
        \EE \widetilde{\Sigma}^2
        \le 
        \frac{32t\rho^{-1}}{C_1C_1}
        .
    \end{equation*}
    Consequently, by the Chernoff bound combined with~\eqref{eq:Z-Bennett-positive-cor},
    \begin{align}\label{eq:Z-uparrow-tail}
    \begin{split}
        \PP(Z^\uparrow \ge t)
        &\le 
        \exp\Bigl(
            -t\lambda^\ast 
            +
            \frac{e^{8\lambda^\ast}}{16}
            \EE Z'_\rho 
        \Bigr)
        \\&=
        \exp\Bigl(
            -\frac{t}{8}
            \log\Bigl(
                1+
                \beta
                \frac{t}{\EE \widetilde{\Sigma}^2}
            \Bigr)
            +
            \frac{1}{16}
            \Bigl(
                \EE Z'_\rho
                +
                \frac{\EE Z'_\rho}{\EE \widetilde{\Sigma}^2}
                t\beta 
            \Bigr)
        \Bigr)
        \\&\le 
        \exp\Bigl(
            -\frac{t}{8}
            \log\Bigl(
                1+
                \beta
                \frac{t}{\EE \widetilde{\Sigma}^2}
            \Bigr)
            +
            \frac{1}{16}
            \Bigl(
                \frac{32t\rho^{-1}}{C_1C_1}
                +
                {t\beta\rho^{-1}}
            \Bigr)
        \Bigr)
        \\&=
        \exp\Bigl(
            -t
            \log\Bigl(
                1+
                \beta
                \frac{t}{\EE \widetilde{\Sigma}^2}
            \Bigr)
            \cdot
            \Bigl(
                \frac{1}{8}
                - 
                \frac{2\alpha}{C_1C_2}
                -
                \frac{\alpha\beta}{16}
            \Bigr)
        \Bigr)
        .
    \end{split}
    \end{align}

    Using the estimate $\log(1+x)\le x$ we obtain that
    \begin{equation}\label{eq:Z-arrow-mean}
        \abs{\EE Z^\downarrow - \EE Z }
        \le 
        \EE Z^\uparrow
        \le 
        \frac{\EE \widetilde{\Sigma}^2}{\rho}
        \le 
        \alpha\beta t.
    \end{equation}
    Thus, combining~\eqref{eq:Z-downarrow-tail},~\eqref{eq:Z-uparrow-tail} and~\eqref{eq:Z-arrow-mean} and as long as 
    $\alpha\beta\le 1/4$
    and 
    $\beta C_1C_2\ge 32(e^2 -1)$,
    we arrive at
    \begin{align*}
        \PP(Z\ge \EE Z + 2t+\alpha\beta t)
        &\le 
        \PP(Z^\uparrow + Z^\downarrow \ge \EE Z + 2t +\alpha\beta t)
        \\&\le 
        \PP(
            Z^\uparrow + Z^\downarrow 
            \ge 
            \EE Z^\downarrow 
            - \vert \EE Z - \EE Z^\downarrow \vert
            + 
            2t+\alpha\beta t
        )
        \\&\le 
        \PP(
            Z^\uparrow + Z^\downarrow 
            \ge 
            \EE Z^\downarrow 
            + 2t
        )
        \\&\le 
        \PP(Z^\uparrow \ge t)
        +
        \PP(Z^\downarrow \ge \EE Z^\downarrow + t)
        \\&\le 
        2
        \exp\Bigl(
            -
            \min\Bigl(
                \frac{\alpha}{32},
                \frac{1}{8}
                -
                \frac{\alpha\beta}{16}
                - 
                \frac{2\alpha}{C_1C_2}
            \Bigr)
            \cdot 
            t
            \log\Bigl(
                1 +
                \beta
                \frac{t}{\EE \widetilde{\Sigma}^2}
            \Bigr)
        \Bigr).
    \end{align*}
    Substituting 
    $t\leftarrow (2+\alpha\beta)^{-1}t$ 
    and estimating 
    $\frac{1}{2+\alpha\beta}\ge \frac{4}{9}$ 
    yields
    \begin{align*}
    \begin{split}
        \PP(Z\ge \EE Z + t)
        &\le 
        2
        \exp\Bigl(
            -
            \frac{1}{2+\alpha\beta}
            \min\Bigl(
                \frac{\alpha}{32},
                \frac{1}{8}
                -
                \frac{\alpha\beta}{16}
                - 
                \frac{2\alpha}{C_1C_2}
            \Bigr)
            \cdot 
            t
            \log\Bigl(
                1 +
                \frac{\beta}{2+\alpha\beta}
                \frac{t}{\EE \widetilde{\Sigma}^2}
            \Bigr)
        \Bigr)
        \\&\le 
        2
        \exp\Bigl(
            -
            \frac{4}{9}
            \min\Bigl(
                \frac{\alpha}{32},
                \frac{1}{8}
                -
                \frac{\alpha\beta}{16}
                - 
                \frac{2\alpha}{C_1C_2}
            \Bigr)
            \cdot 
            t
            \log\Bigl(
                1 +
                \frac{4\beta}{9}
                \frac{t}{\EE \widetilde{\Sigma}^2}
            \Bigr)
        \Bigr)
        \\&\le 
        2
        \exp\Bigl(
            -
            \frac{4}{9}
            \min\Bigl(
                \frac{\alpha}{32},
                \frac{1}{8}
                -
                \frac{\alpha\beta}{16}
                - 
                \frac{2\alpha}{C_1C_2}
            \Bigr)
            \cdot 
            t
            \log\Bigl(
                1 +
                \frac{t}{C_2\EE \widetilde{\Sigma}^2}
            \Bigr)
        \Bigr).
    \end{split}
    \end{align*}
    as long as 
    $\alpha\beta\le 1/4$,
    $\beta C_1C_2\ge 32(e^2 -1)$
    and
    $4\beta C_2\ge 9$.
    Setting 
    $\alpha=2$ and $\beta=\frac{1}{8}$ 
    yields the result with 
    $C_1=36$ and $C_2=46$.
\end{proof}

\section{Concentration for a single Hoeffding statistic}\label{sec:single-hoeffding}
In this section, we provide concentration bounds for single Hoeffding statistics, extending the results of Chatterjee~\cite{MR2288072}, Bercu--Deylon--Rio~\cite{MR3363542} and Albert~\cite{mlis2018concentration}.
In the sequel, $f$ denotes some Hoeffding statistics, i.e.,
\begin{equation}\label{eq:Hoeffding-statistic}
f(\sigma) = \sum_{k=1}^n a_{k\sigma(k)},
\end{equation}
where $(a_{ij})_{i,j=1}^n\in\RR^{n\times n}$ is some real matrix.
The main result of this section is the following theorem.
To the best of our knowledge, this is the first result that captures both the subgaussian and Poisson behaviors of Hoeffding statistics.
\begin{theorem}\label{T:f-Bennett-general}
    Let $f$ be given by~\eqref{eq:Hoeffding-statistic}.
    If $a_{ij} \in [-1,1]$ for all $i,j$ and 
    $\sum_{ij} a_{ij}=0$, then for some absolute constants $C_1,C_2>0$,
    \[
        \forall\; t\ge 0
        \qquad 
        \PP(f\ge t)
        \le 
        2\exp\Bigl(
            -\frac{t}{C_1}
            \log\Bigl(
                1 + \frac{t}{C_2\EE \Sigma^2}
            \Bigr)
        \Bigr),
    \]
    where 
    $\Sigma^2 = \sum_k a_{k\sigma_k}^2$
    so that
    $\EE\Sigma^2 = \frac{1}{n}\sum_{ij}a_{ij}^2$
    .
    One can take $C_1=C_2=36$
\end{theorem}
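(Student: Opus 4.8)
The plan is to mirror the structure of the proof of Theorem~\ref{T:Z-Bennett-general}, which handled the special case of matrices with repeated rows. The theorem splits the deviation into a "bounded" part and a "heavy-tail" part. For this we need two ingredients: a Bernstein-type inequality for $f$ (the analogue of Proposition~\ref{L:Z-Bernstein-general}), and an upper bound on the log-Laplace transform for the positive truncated part (the analogue of Lemma~\ref{T:Z-Bennett-positive}). Presumably both of these are established for general Hoeffding statistics in Section~\ref{sec:missing-proofs} and the appendix, since Theorem~\ref{T:f-Bennett-general} is the headline result of Section~\ref{sec:single-hoeffding}. My strategy assumes such a Bernstein bound of the form $\PP(f\ge t)\le\exp(-\min(t/c_1, t^2/(c_2\EE\Sigma^2)))$ and a bound $\log\EE e^{\lambda f^+_{\text{trunc}}}\le \frac{1}{16}e^{8\lambda}\EE(\cdots)$ valid for $\lambda\ge 1/4$.

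First I would fix a truncation level $\rho^{-1}=\alpha\log(1+\beta t/\EE\Sigma^2)$, exactly as in the previous theorem, and decompose the matrix entries into their small and large parts. Writing $f = \sum_k a_{k\sigma(k)}$, I set $f^\downarrow = \sum_k a_{k\sigma(k)}\indbr{\abs{a_{k\sigma(k)}}\le\rho}$ and $f^\uparrow = \sum_k \abs{a_{k\sigma(k)}}\indbr{\abs{a_{k\sigma(k)}} > \rho}$, so that $f\le f^\downarrow + f^\uparrow$. The key simplification relative to the supremum case is that there is no sup over $\calX$, so $f^\downarrow$ and $f^\uparrow$ are honest random variables and the variance proxy is deterministic: $\EE\Sigma^2=\frac1n\sum_{ij}a_{ij}^2$. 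I would split into the two regimes $32t<C_1C_2\EE\Sigma^2$ and $32t\ge C_1C_2\EE\Sigma^2$. In the small-$t$ regime the Bernstein bound with $\log(1+x)\le x$ gives the Gaussian tail directly. In the large-$t$ regime, I bound the truncated tail $\PP(f^\downarrow\ge\EE f^\downarrow+t)$ by Bernstein applied to $f^\downarrow/\rho$, using the scaling $t/(32\rho)\le\alpha\beta t^2/(32\EE\Sigma^2)$ together with $\alpha\beta\le 1/4$, while the heavy part $\PP(f^\uparrow\ge t)$ is handled by Chernoff with $\lambda^\ast=\frac18\log(1+\beta t/\EE\Sigma^2)$, using $\EE f^\uparrow_{\text{trunc}}\le\rho^{-1}\EE\Sigma^2$ and the centering estimate $\abs{\EE f^\downarrow-\EE f}\le\EE f^\uparrow\le\alpha\beta t$.

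The main obstacle I anticipate is not the combinatorial structure but \emph{verifying the two underlying lemmas in the general-matrix setting}. In Theorem~\ref{T:Z-Bennett-general} one could invoke Hoeffding's comparison (Lemma~\ref{L:hoeffding}) to pass from sampling without replacement to the i.i.d.\ statistic $Z'$, and then apply Ledoux's bound (Lemma~\ref{T:Z-Bennett-positive}) to the with-replacement supremum. For a general Hoeffding statistic there is no such product-measure surrogate, so the log-Laplace estimate for $f^\uparrow$ must be proved directly from the modified log-Sobolev inequality~\eqref{eq:mLS} with $\rho_0\ge\frac{1}{n-1}$, controlling $\EE e^{\lambda f}\sum_{ij}(1-e^{-\lambda(f-f_{ij})})_+(f-f_{ij})_+$ and using that permuting two coordinates changes $f$ by at most $\abs{a_{k\sigma(k)}}$-type terms. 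Once both lemmas are in hand, the regime split and the bookkeeping of $\alpha,\beta$ proceed verbatim, and choosing the constants (here $\alpha, \beta$ tuned so that $\min(\alpha/32,\ \frac18-\frac{\alpha\beta}{16}-\frac{2\alpha}{C_1C_2})$ is balanced) yields $C_1=C_2=36$; the improvement in $C_2$ relative to the supremum case reflects the absence of the factor-of-$8$ loss from symmetrization and the Rademacher contraction used in Remark~\ref{R:TBK-comparision}.
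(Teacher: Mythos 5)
Your proposal is correct in outline and rests on the same core decomposition as the paper (truncation at level $\rho^{-1}\asymp\log(1+t/\EE\Sigma^2)$, the Bernstein bound of Proposition~\ref{L:f-Bernstein-general} applied to $f^\downarrow/\rho$, a Poisson-type Laplace bound for the heavy part, and centering via $\abs{\EE f^\downarrow}\le\EE f^\uparrow\le\rho^{-1}\EE\Sigma^2$), but the paper's proof is organized quite differently and more simply than the Theorem~\ref{T:Z-Bennett-general} machinery you import. First, the obstacle you flag --- that Lemma~\ref{L:hoeffding} and Ledoux's Lemma~\ref{T:Z-Bennett-positive} are unavailable for general matrices --- is resolved in the paper not by re-deriving a Ledoux-type bound $\log\EE e^{\lambda f^\uparrow}\le\frac{1}{16}e^{8\lambda}\EE f^\uparrow$ from scratch, but by observing that $f^\uparrow$ is itself a Hoeffding statistic with entries $\abs{a_{ij}}\indbr{\abs{a_{ij}}>\rho}\in[0,1]$, so Theorem~\ref{T:f-Bennett-positive} (whose proof is exactly the mLSI computation you sketch, via the estimate $\sum_{ij}(f_{ij}-f)_+\le 2n\EE f$ and Proposition~\ref{P:Herbst_Poisson}) applies to it as a black box --- no Chernoff optimization over $\lambda^\ast$ is needed. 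Second, the paper dispenses entirely with your two-regime split and the $(\alpha,\beta)$ bookkeeping: it fixes $\rho^{-1}=2\log(1+\frac{t}{16\EE\Sigma^2})$, notes that $\log(1+x)\le x$ forces the Bernstein minimum to be attained at $\frac{t}{32\rho}$ for \emph{all} $t\ge0$, and uses the elementary inequality $\log(1+x)\le 2\log(1+\sqrt{x})\le 2\sqrt{x}$ to get $\EE f^\uparrow\le\sqrt{t\EE\Sigma^2}$, which converts the Theorem~\ref{T:f-Bennett-positive} tail for $f^\uparrow$ into $\exp(-\frac{t}{8}\log(1+\frac{t}{16\EE\Sigma^2}))$; combining and substituting $t\leftarrow 4t/9$ yields $C_1=C_2=36$ in one stroke. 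Your route would also produce a valid Bennett bound, but the constants would come out of the $\min(\alpha/32,\ldots)$ balancing and almost certainly not as $36/36$; and your stated explanation for the improved $C_2$ --- the absence of the factor-of-$8$ symmetrization loss --- is mistaken, since symmetrization and the contraction principle enter only in the comparison with Tolstikhin--Blanchard--Kloft in Remark~\ref{R:TBK-comparision}, not in the derivation of the constants of Theorem~\ref{T:Z-Bennett-general}; the genuine source of the cleaner constants is the single-regime argument and the direct applicability of Theorem~\ref{T:f-Bennett-positive} to $f^\uparrow$, which has no analogue for suprema.
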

\begin{remark}
    As in Bercu--Deylon--Rio~\cite{MR3363542}, note that setting
    \[
        d_{ij} =
        a_{ij}
        -
        \frac{1}{n}
        \sum_{k=1}^n\bigl(
            a_{ik} + a_{kj}
        \bigr)
        +
        \frac{1}{n^2}
        \sum_{k,l=1}^n a_{kl}
    \]
    yields 
    $
        \Var(f) =
        \frac{1}{n-1}
        \sum_{ij} d_{ij}^2
    $
    and 
    $
        f-\EE f = \sum_{k=1}^n d_{k\sigma(k)}.
    $
    Therefore, an application of Theorem~\ref{T:f-Bennett-general} to $(f-\EE f)/2$ in place of $f$ (note that $\sum_{ij}d_{ij}=0$, while $a_{ij}\in[-1,1]$ are arbitrary) provides that 
    \begin{equation}\label{eq:f-Bennett-general-centered}
        \forall\; t\ge 0
        \qquad 
        \PP(f\ge \EE f + t)
        \le 
        2\exp\Bigl(
            -\frac{t}{2C_1}
            \log\Bigl(
                1 + \frac{t}{2C_2\Var(f)}
            \Bigr)
        \Bigr).
    \end{equation}
    As shown by Hoeffding in~\cite{MR44058} (cf. also Bolthausen~\cite{MR751577} for a Stein method based approach), as soon as 
    \[
      \lim_{n\to\infty}
      \frac{\max_{i,j\in[n]} d_{ij}}{\Var(S_n)} = 0,  
    \]
    then $f$ verifies the CLT, i.e.,
    \[
      \frac{f-\EE f}{\sqrt{\Var(f)}}
        \overset{n\to\infty}\longrightarrow  
        \mathcal{N}(0,1)
    \]
    in law.
    Clearly, the bound from~\eqref{eq:f-Bennett-general-centered} becomes subgaussian  for small values of $t$ and whence matches the CLT behavior described above (up to numerical constants).
    Similarly, if one chooses $a_{ij}=\indbr{i=j}$, then $f$ becomes the number of fixed  points of a random permutation $\sigma$.
    The exact tail distribution of $f$ in such case is well known, cf.~\cite[Section IV.4]{feller2008introduction}, and is of order $\exp(-Ct\log t)$ for $t$ big and some $C>0$, which agrees with the bound~\eqref{eq:f-Bennett-general-centered}.
    This shows that Theorem~\ref{T:f-Bennett-general} is optimal up to the numerical  constants.
\end{remark}
To prove the Bennett inequality of Theorem~\ref{T:f-Bennett-general}, we first derive it for non-negative statistics in the theorem below. 
\begin{theorem}\label{T:f-Bennett-positive}
    Let $f$ be given by~\eqref{eq:Hoeffding-statistic}.
    If $a_{ij}\in [0,1]$ for all $i,j$, then 
    \begin{equation*}
        \forall\; t\ge 0 
        \qquad 
        \PP(f> \EE f + t)
        \le 
        \exp\Bigl(
            -\frac{t}{4}
            \log\Bigl(
                1+
                \frac{t}{4\EE f}
            \Bigr)
        \Bigr).
    \end{equation*}
\end{theorem}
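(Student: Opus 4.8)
The plan is to run the entropy method driven by the modified log-Sobolev inequality~\eqref{eq:mLS}, exactly as the paper set it up for $Z$, and then to feed the resulting differential inequality into a Herbst-type comparison against the sub-Poisson profile. I would apply~\eqref{eq:mLS} with $\rho_0=1/(n-1)$ to $g=e^{\lambda f}$ for $\lambda>0$. Since $e^{\lambda f}$ and $\lambda f$ are monotone transforms of the same statistic, they share the same monotonicity, so the positive-part form of the Dirichlet form recorded in Section~1 applies and yields
\[
    \Ent(e^{\lambda f})\le \frac{\lambda}{n}\EE\sum_{ij}(e^{\lambda f_{ij}}-e^{\lambda f})_+(f_{ij}-f)_+,
\]
the analogue of the displayed bound for $Z$. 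A direct computation gives $f_{ij}-f=a_{i\sigma(j)}+a_{j\sigma(i)}-a_{i\sigma(i)}-a_{j\sigma(j)}$, so that $(f_{ij}-f)_+\le a_{i\sigma(j)}+a_{j\sigma(i)}\in[0,2]$. It is essential to keep the ``$-1$'' that encodes the Poisson regime: restricting to the increments with $f_{ij}>f$ is exactly what preserves it.

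I would then exploit that the positive increments lie in $[0,2]$. On the event $f_{ij}>f$ I would apply the chord (convexity) bound $e^{\lambda(f_{ij}-f)}-1\le\tfrac12(e^{2\lambda}-1)(f_{ij}-f)$, turning each summand into $e^{\lambda f}\tfrac12(e^{2\lambda}-1)(f_{ij}-f)_+^2$. The crucial observation is that the remaining variance-type sum is \emph{deterministic}: using $a_{ij}\in[0,1]$ and that $\sigma$ is a bijection, $\sum_{ij}(f_{ij}-f)_+^2\le 4\sum_{ij}a_{i\sigma(j)}=4\sum_{i,l}a_{il}=4n\EE f$. Substituting yields the clean entropy estimate
\[
    \Ent(e^{\lambda f})\le 2\lambda(e^{2\lambda}-1)\,\EE f\cdot \EE e^{\lambda f}.
\]

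Writing $F(\lambda)=\log\EE e^{\lambda f}$ and $\mu=\EE f$, this reads $\lambda F'(\lambda)-F(\lambda)\le 2\mu\lambda(e^{2\lambda}-1)$. I would compare $F$ with the sub-Poisson profile $F^\ast(\lambda)=\lambda\mu+2\mu(e^{2\lambda}-1-2\lambda)$: a short computation gives $\lambda F^{\ast\prime}-F^\ast=2\mu(2\lambda e^{2\lambda}-e^{2\lambda}+1)$, which dominates $2\mu\lambda(e^{2\lambda}-1)$, the difference being $2\mu[e^{2\lambda}(\lambda-1)+\lambda+1]$, a function that vanishes to second order at $0$ and is convex, hence nonnegative. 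Therefore $(F-F^\ast)/\lambda$ is non-increasing and tends to $F'(0)-F^{\ast\prime}(0)=0$ as $\lambda\to0^+$, so $F\le F^\ast$, that is
\[
    \log\EE e^{\lambda f}\le \lambda\EE f+2\EE f\,(e^{2\lambda}-1-2\lambda).
\]

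Finally I would optimize the Chernoff bound. This is the standard sub-Poisson profile with scale $2$ and variance proxy $8\EE f$, so its Legendre transform gives $\PP(f>\EE f+t)\le\exp(-2\EE f\,h(t/4\EE f))$ with $h(u)=(1+u)\log(1+u)-u$, and the elementary inequality $h(u)\ge\tfrac{u}{2}\log(1+u)$ (which follows since $h(u)-\tfrac{u}{2}\log(1+u)$ vanishes together with its first derivative at $0$ and has nonnegative second derivative) produces exactly the stated bound with constants $4$ and $4$. I expect the main obstacle to be the first two steps: correctly invoking the same-monotonicity positive-part form so that only the increments with $f_{ij}>f$ survive, and then recognizing that after the chord bound the quadratic increment sum collapses to the deterministic quantity $4n\EE f$ via the bijection identity $\sum_j a_{i\sigma(j)}=\sum_l a_{il}$. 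Once the entropy inequality is in the form above, the Herbst-type ODE comparison and the concluding Chernoff optimization are routine.
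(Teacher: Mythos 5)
Your proof is correct, and its first half coincides with the paper's own argument: you drive the entropy method with the modified log-Sobolev inequality applied to $e^{\lambda f}$, use $(f_{ij}-f)_+\le a_{i\sigma(j)}+a_{j\sigma(i)}\le 2$ together with the bijection identity $\sum_j a_{i\sigma(j)}=\sum_l a_{il}$, and land on exactly the paper's intermediate estimate $\Ent(e^{\lambda f})\le 2\lambda(e^{2\lambda}-1)\,\EE f\,\EE e^{\lambda f}$. (The paper gets there slightly more directly, bounding $e^{\lambda(f_{ij}-f)_+}-1\le e^{2\lambda}-1$ and then the \emph{linear} sum $\sum_{ij}(f_{ij}-f)_+\le 2n\EE f$, whereas you pass through the squared increments via the chord bound and then collapse them with $u^2\le 2u$ for $u\in[0,2]$ --- the two manipulations use the bound $(f_{ij}-f)_+\le 2$ in a different order and are equivalent.) Where you genuinely diverge is the finishing Herbst step. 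The paper weakens once more, via $e^{2\lambda}-1\le 2\lambda e^{2\lambda}$, precisely so that Proposition~\ref{P:Herbst_Poisson} applies with $a=4\EE f$, $b=2$; that proposition integrates the derivative bound for the normalized log-Laplace transform and concludes with a non-optimized Chernoff choice $\lambda=\frac1b\log\bigl(1+\frac{b}{2a}t\bigr)$. You instead keep the unweakened differential inequality $\lambda F'-F\le 2\mu\lambda(e^{2\lambda}-1)$ for $F(\lambda)=\log\EE e^{\lambda f}$, $\mu=\EE f$, and compare against the exact Bennett profile $F^\ast(\lambda)=\lambda\mu+2\mu(e^{2\lambda}-1-2\lambda)$; your verification that $\lambda F^{\ast\prime}-F^\ast-2\mu\lambda(e^{2\lambda}-1)=2\mu\bigl[e^{2\lambda}(\lambda-1)+\lambda+1\bigr]\ge 0$ is correct (convex, vanishing to second order at $0$), and the monotonicity of $(F-F^\ast)/\lambda$ with limit $0$ at $0^+$ then gives $F\le F^\ast$. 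This Laplace-transform bound is sharper than the one implicit in the paper's route (which amounts to $2\mu\lambda(e^{2\lambda}-1)$, larger by a factor of order $\lambda$ for large $\lambda$), but after your exact Chernoff optimization and the inequality $h(u)\ge\frac{u}{2}\log(1+u)$ you recover the identical constants $(4,4)$, so the gain survives only at the level of the moment generating function, not the final tail. In short: your route buys a self-contained Bennett-profile comparison and a genuinely Poissonian intermediate bound $\log\EE e^{\lambda(f-\EE f)}\le 2\EE f\,(e^{2\lambda}-1-2\lambda)$ that could be quoted on its own; the paper's buys modularity, since Proposition~\ref{P:Herbst_Poisson} is proved once and reused as a black box.
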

\begin{remark}
    Theorem~\ref{T:f-Bennett-positive} already improves (up to numerical constants in the exponent) upon a Bernstein-type bound
    \[
        \forall\; t\ge 0 
        \qquad 
        \PP(f> \EE f + t)
        \le 
        \exp\Bigl(
            -
            \frac{t^2}{4\EE f + 2t}
        \Bigr)
    \] 
    obtained by Chatterjee~\cite[Proposition 1.1]{MR2288072}.
\end{remark}
\begin{proof}[Proof of Theorem~\ref{T:f-Bennett-positive}]
    Since $a_{ij}\in[0,1]$, then for any $i,j$,
    \begin{equation}\label{eq:f-fij_sum}
        \sum_{ij}
        (f_{ij}-f)_+
        =
        \sum_{ij}
        (a_{i\sigma_j}+a_{j\sigma_i} - a_{i\sigma_i}-a_{j\sigma_j})_+
        \le 
        \sum_{ij}
        (a_{i\sigma_j}+a_{j\sigma_i})
        =
        2\sum_{ij}a_{ij}
        =
        2n\EE f.
    \end{equation}
    By the modified log-Sobolev inequality, using~\eqref{eq:f-fij_sum} and convexity of $x\mapsto e^{2x}$, we arrive at
    \begin{align*}
        \Ent(e^{\lambda f})
        &\le 
        \frac{\lambda}{n}
        \EE e^{\lambda f}
        \sum_{ij}
        (e^{\lambda (f_{ij}-f)_+}-1)
        (f_{ij}-f)_+
        \\&\le 
        \frac{\lambda}{n}
        (e^{2\lambda}-1)
        \EE e^{\lambda f}
        \sum_{ij}(f_{ij}-f)_+
        \\&\le 
        2\lambda(e^{2\lambda}-1)
        \EE f
        \EE e^{\lambda f}
        \\&\le 
        4\lambda^2 e^{2\lambda}
        \EE f
        \EE e^{\lambda f}
    \end{align*}
    for all $\lambda\ge 0$.
    Hence, using Proposition~\ref{P:Herbst_Poisson} with $a=4\EE f$, $b=2$ gives the conclusion.
\end{proof}

Finally, to prove Theorem~\ref{T:f-Bennett-general}, we need the following proposition.
We defer its proof to Section~\ref{sec:missing-proofs}.
\begin{proposition}\label{L:f-Bernstein-general}
    Let $f$ be given by~\eqref{eq:Hoeffding-statistic}.
    If $a_{ij} \in [-1,1]$ for all $i,j$, then 
    \[
        \forall\; t\ge 0
        \qquad 
        \PP(f\ge \EE f + t)
        \le 
        \exp\Bigl(
            -\min\Bigl(
                \frac{t}{32},
                \frac{t^2}{128\EE{\Sigma}^2}
            \Bigr)
        \Bigr),
    \]
    where $\Sigma^2=\sum_{k}a_{k\sigma_k}^2$ so that 
    $\EE \Sigma^2 = \frac{1}{n}\sum_{ij}a_{ij}^2$.
\end{proposition}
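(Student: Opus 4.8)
The plan is to run the entropy method based on the modified log-Sobolev inequality~\eqref{eq:mLS}, in the same spirit as the proof of Theorem~\ref{T:f-Bennett-positive}, but now tracking a variance proxy rather than the mean. Set $D_{ij} = f_{ij} - f = a_{i\sigma_j} + a_{j\sigma_i} - a_{i\sigma_i} - a_{j\sigma_j}$ and apply~\eqref{eq:mLS} to $g = e^{\lambda f}$ with $\rho_0 = 1/(n-1)$; since $e^{\lambda f}$ and $\lambda f$ share monotonicity, this yields
\[
    \Ent(e^{\lambda f})
    \le
    \frac{\lambda}{2n}
    \EE\, e^{\lambda f}
    \sum_{i,j=1}^n
    \bigl(e^{\lambda D_{ij}} - 1\bigr)D_{ij}.
\]
Each summand is nonnegative, and the elementary inequality $(e^{\lambda x} - 1)x \le \lambda x^2 e^{\lambda x_+}$ combined with the deterministic bound $(D_{ij})_+ \le 4$ lets me replace $e^{\lambda D_{ij}} - 1$ by a multiple of $\lambda D_{ij}$, provided $\lambda$ is restricted to a fixed range $\lambda \le \lambda_0$ so that $e^{4\lambda}$ stays bounded. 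This reduces the right-hand side to a quantity of the form $c\lambda^2\,\EE[e^{\lambda f}\sum_{ij}D_{ij}^2]$.

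Next I would identify the variance proxy. Expanding $\sum_{ij}D_{ij}^2$ and using that, for any permutation, $\sum_{ij}a_{i\sigma_j}^2 = \sum_{ij}a_{j\sigma_i}^2 = \sum_{ij}a_{ij}^2 = n\EE\Sigma^2$ are deterministic while $\sum_{ij}a_{i\sigma_i}^2 = \sum_{ij}a_{j\sigma_j}^2 = n\Sigma^2$, one gets $\sum_{ij}D_{ij}^2 \le Cn(\EE\Sigma^2 + \Sigma^2)$ for an absolute $C$. The factor $n$ cancels the $1/n$ coming from the generator, leaving a bound of the shape
\[
    \Ent(e^{\lambda f})
    \le
    c'\lambda^2\bigl(\EE\Sigma^2\cdot\EE e^{\lambda f} + \EE[\Sigma^2 e^{\lambda f}]\bigr).
\]

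The main obstacle is the self-bounding term $\EE[\Sigma^2 e^{\lambda f}]$: the pointwise proxy $\Sigma^2 = \sum_k a_{k\sigma_k}^2$ is random and is not dominated by $\EE\Sigma^2$, so the inequality does not yet close. I would resolve this by a tilting (Donsker--Varadhan) argument: for the probability measure $d\nu \propto e^{\lambda f}\,d\pi_n$ one has $\theta\,\EE[\Sigma^2 e^{\lambda f}] \le \EE e^{\lambda f}\cdot\log\EE e^{\theta\Sigma^2} + \Ent(e^{\lambda f})$ for every $\theta > 0$. Choosing $\theta \asymp \lambda^2$ absorbs the stray $\Ent(e^{\lambda f})$ into the left-hand side, while $\log\EE e^{\theta\Sigma^2}$ is controlled by the exponential-moment estimate behind Theorem~\ref{T:f-Bennett-positive} applied to the nonnegative Hoeffding statistic $\Sigma^2$ (whose matrix $(a_{ij}^2)$ has entries in $[0,1]$ and mean $\EE\Sigma^2$); for small $\theta$ this gives $\log\EE e^{\theta\Sigma^2} \le 2\theta\,\EE\Sigma^2$. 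The net effect is a clean differential inequality $\Ent(e^{\lambda f}) \le \tilde c\,\EE\Sigma^2\,\lambda^2\,\EE e^{\lambda f}$ valid for $\lambda \le \lambda_0$. Finally, a Herbst-type integration of this inequality produces the subgaussian regime $\exp(-t^2/(128\EE\Sigma^2))$ for $t$ below the crossover, and optimizing the Chernoff bound at the endpoint $\lambda = \lambda_0$ produces the linear regime $\exp(-t/32)$ for large $t$; taking the minimum gives the claim. The explicit constants $32$ and $128$ are then a matter of optimizing $\lambda_0$, $\theta$ and $C$.
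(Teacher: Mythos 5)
Your proposal is correct and takes essentially the same route as the paper, which proves the statement via the more general Proposition~\ref{P:S-Bernstein-general} using exactly your chain: the modified log-Sobolev inequality applied to $e^{\lambda f}$, the quadratic bound $\sum_{ij}(f-f_{ij})_+^2\le 8n\bigl(\Sigma^2+\EE\Sigma^2\bigr)$ splitting into a deterministic part and the random proxy $\Sigma^2$, decoupling of the self-bounding term $\EE[\Sigma^2 e^{\lambda f}]$ through the variational entropy inequality~\eqref{eq:entropy_variational2} together with the small-parameter exponential moment bound $\log\EE e^{\Sigma^2/4}\le\tfrac12\EE\Sigma^2$ for the nonnegative Hoeffding statistic with matrix $(a_{ij}^2)$ (Lemma~\ref{L:S-Bernstein-positive}, which is the precise form of the estimate you attribute to the machinery behind Theorem~\ref{T:f-Bennett-positive}), and finally a Herbst integration restricted to $\lambda\in[0,1/16]$ (Proposition~\ref{P:Herbst_Bernstein2}) yielding the two regimes. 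Your only deviations are cosmetic and harmless: the paper uses $(1-e^{-x})_+\le x$ on the one-sided differences so that no $e^{4\lambda}$ factor ever appears, and it fixes $\theta=1/4$ in the decoupling step (absorbing the entropy with the factor $1-32\lambda^2$) rather than taking $\theta\asymp\lambda^2$, but both variants close the argument with the stated constants.
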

\begin{proof}[Proof of Theorem~\ref{T:f-Bennett-general}]
    For a fixed $t > 0$, set
    \[
        \rho^{-1}
        =
        2
        \log\bigl(
            1+\frac{t}{16 \EE \Sigma^2}
        \bigr)
    \] 
    and denote 
    \[
        f^\downarrow(\sigma)
        = 
        \sum_{i}
        a_{i\sigma_i}
        \indbr{\abs{a_{i\sigma_i}}\le \rho}
    \]  
    and 
    \[
        f^\uparrow(\sigma)
        =
        \sum_i
        \abs{a_{i\sigma_i}}
        \indbr{\abs{a_{i\sigma_i}} > \rho}
    \]
    so that $f\le f^\downarrow + f^\uparrow$.
    We estimate the tail probabilities for $f^\downarrow$ and $f^\uparrow$.

    By the estimate $\log(1+x)\le x$ and by the definition of $\rho$,
    \begin{equation*}
        \frac{t}{32\rho}
        \le 
        \frac{t^2}{256\EE \Sigma^2}
        \le 
        \frac{t^2}{128\EE \Sigma^2},
    \end{equation*}
    whence by Proposition~\ref{L:f-Bernstein-general} applied to $f^\downarrow/\rho$,
    \begin{align}\label{eq:f-downarrow-tail}
        \begin{split}
        \PP(f^\downarrow \ge \EE f^\downarrow + t)
        &\le 
        \exp\Bigl(
            -\min\Bigl(
                \frac{t}{32\rho},
                \frac{t^2}{128\EE \Sigma^2}
            \Bigr)
        \Bigr)
        \\&=
        \exp\Bigl(
            -\frac{t}{32\rho}
        \Bigr)
        =
        \exp\Bigl(
            -
            \frac{t}{16}
            \log\bigl(
                1+\frac{t}{16\EE \Sigma^2}
            \bigr)
        \Bigr).
        \end{split}
    \end{align}
    By the definitions of $f^\uparrow, \rho$ and estimate $\log(1+x)\le 2\log(1+\sqrt{x})\le 2\sqrt{x}$,
    \[
        \EE f^\uparrow 
        \le 
        \frac{\EE \Sigma^2}{\rho}  
        =
        2(\EE \Sigma^2)
        \log\Bigl(
            1+\frac{t}{16\EE\Sigma^2}
        \Bigr)
        \le 
        \sqrt{t\EE\Sigma^2},
    \]
    whence by Theorem~\ref{T:f-Bennett-positive} applied to $f^\uparrow$,
    \begin{align}\label{eq:f-uparrow-tail}
        \begin{split}
        \PP(f^\uparrow \ge \EE f^\uparrow + t)
        &\le  
        \exp\Bigl(
            -\frac{t}{4}
            \log\Bigl(
                1+\frac{t}{4\EE f^\uparrow}
            \Bigr)
        \Bigr)
        \\&\le  
        \exp\Bigl(
            -\frac{t}{4}
            \log\Bigl(
                1+
                \frac{1}{4}
                \sqrt{
                    \frac{t}{\EE \Sigma^2}
                }
            \Bigr)
        \Bigr)
        \\&\le  
        \exp\Bigl(
            -\frac{t}{8}
            \log\Bigl(
                1+
                \frac{t}{16\EE \Sigma^2}
            \Bigr)
        \Bigr),
        \end{split}
    \end{align}
    where in the last step we have used again the estimate $2\log(1+\sqrt{x})\ge \log(1+x)$.
    Using the assumption $\EE f =0$, triangle inequality and estimating $\log(1+x)\le x$, we obtain 
    \begin{equation}\label{eq:f-arrow-mean}
        \abs{\EE f^\downarrow}
        =
        \abs{\EE f^\downarrow - \EE f }
        \le 
        \EE f^\uparrow
        \le 
        \frac{\EE \Sigma^2}{\rho}
        \le 
        \frac{1}{8}t.
    \end{equation}
    By combining~\eqref{eq:f-downarrow-tail},~\eqref{eq:f-uparrow-tail} and~\eqref{eq:f-arrow-mean} we arrive at
    \begin{align*}
        \PP(f\ge 9t/4)
        &\le 
        \PP(f^\downarrow \ge 9t/8)
        +
        \PP(f^\uparrow \ge 9t/8)
        \\&\le 
        \PP(f^\downarrow \ge \EE f^\downarrow + t)
        +
        \PP(f^\uparrow \ge \EE f^\uparrow + t)
        \\&\le 
        2
        \exp\Bigl(
            -\frac{t}{16}
            \log\Bigl(
                1+
                \frac{t}{16\EE \Sigma^2}
            \Bigr)
        \Bigr).
    \end{align*}
    Substituting $t \leftarrow 4t/9$ yields the result.
\end{proof}

\section{Proof of Propositions~\ref{L:Z-Bernstein-general} and~\ref{L:f-Bernstein-general}}\label{sec:missing-proofs}
Both propositions are special cases of a more general result for suprema of Hoeffding statistics which we provide below.
Let $R\subset\RR^{n\times n}$ be a set of real matrices.
Denote
\begin{equation}\label{eq:S_def}
  S = \sup_{r\in R} \sum_{k=1}^n r_{k\sigma_k}.
\end{equation}
The main result of this section is the following estimate.
\begin{proposition}\label{P:S-Bernstein-general}
    Let $S$ be given by~\eqref{eq:S_def} and assume
    $R \subset [-1,1]^{n\times n}$.
    Then
    \[
        \forall\; t\ge 0
        \qquad 
        \PP(S \ge \EE S + t)
        \le 
        \exp\Bigl(
            -\min\Bigl(
                \frac{t}{32},
                \frac{t^2}{128 \EE 
                \Sigma_R^2}
            \Bigr)
        \Bigr),
    \]
    where 
    $\Sigma_R^2 = \sup_{r\in R} \sum_k r_{k\sigma_k}^2$.
\end{proposition}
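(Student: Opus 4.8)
The plan is to prove a Bernstein-type bound for the supremum $S$ via the entropy method, following the same route that the modified log-Sobolev inequality opens up for us. The starting point is the mLS inequality applied to the Laplace transform $e^{\lambda S}$, which (exactly as displayed for $Z$ in Section~\ref{sec:suprema}) gives a bound on $\Ent(e^{\lambda S})$ in terms of $\EE e^{\lambda S}\sum_{ij}(1-e^{-\lambda(S-S_{ij})})_+(S-S_{ij})_+$, where $S_{ij}=\sup_{r\in R}\sum_k r_{k\sigma_{ij}(k)}$. The key structural fact is that swapping two coordinates of $\sigma$ changes $S$ by a controlled amount: if $r^\ast$ is a (near-)maximizer for $\sigma$, then $S-S_{ij}\le (r^\ast_{i\sigma_i}+r^\ast_{j\sigma_j})-(r^\ast_{i\sigma_j}+r^\ast_{j\sigma_i})$ and in particular $(S-S_{ij})_+\le 2$ pointwise since all entries lie in $[-1,1]$. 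This boundedness lets us linearize the exponential factor.

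First I would bound $(1-e^{-\lambda(S-S_{ij})})_+\le \lambda (S-S_{ij})_+$ for the increments, so that the mLS right-hand side is controlled by $\tfrac{\lambda^2}{n}\EE e^{\lambda S}\sum_{ij}(S-S_{ij})_+^2$. The next and most important step is to bound $\sum_{ij}(S-S_{ij})_+^2$ by a quantity proportional to $\Sigma_R^2=\sup_{r\in R}\sum_k r_{k\sigma_k}^2$. Fixing a maximizer $r^\ast=r^\ast(\sigma)$ of $S$, the positive part of each increment is at most $(r^\ast_{i\sigma_i}-r^\ast_{i\sigma_j})_+ + (r^\ast_{j\sigma_j}-r^\ast_{j\sigma_i})_+$; squaring, using $(a+b)^2\le 2a^2+2b^2$ and the bound $|r^\ast_{i\sigma_i}-r^\ast_{i\sigma_j}|\le 2$, the double sum collapses to something comparable to $n\sum_k (r^\ast_{k\sigma_k})^2\le n\Sigma_R^2$ plus a lower-order term, giving $\sum_{ij}(S-S_{ij})_+^2\le C n(\Sigma_R^2+1)$ for an explicit constant. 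This is the step that carries the variance proxy, and it is where the self-bounding structure of a supremum must be exploited carefully.

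Feeding this back produces a differential inequality for $H(\lambda)=\log\EE e^{\lambda S}$ of the form $\Ent(e^{\lambda S})\le C\lambda^2(\EE\Sigma_R^2+\cdots)\EE e^{\lambda S}$, which after the standard manipulation $\tfrac{d}{d\lambda}\bigl(\tfrac{H(\lambda)}{\lambda}\bigr)=\tfrac{\Ent(e^{\lambda S})}{\lambda^2\EE e^{\lambda S}}$ and integration yields a bound $H(\lambda)\le \lambda\EE S + C\lambda^2\EE\Sigma_R^2$ valid on a range $0\le\lambda\le\lambda_0$ (here the restriction $\lambda\lesssim 1$ comes from the linearization and the boundedness of the increments by $2$). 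Optimizing the Chernoff bound $\PP(S\ge\EE S+t)\le\exp(-\lambda t+C\lambda^2\EE\Sigma_R^2)$ over this range then produces the Gaussian regime $t^2/(128\EE\Sigma_R^2)$ for small $t$ and the linear regime $t/32$ once the optimizer hits the boundary $\lambda_0$, which is precisely the stated minimum.

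The main obstacle I anticipate is the variance-proxy step: showing that $\sum_{ij}(S-S_{ij})_+^2$ is genuinely controlled by $n\,\Sigma_R^2$ rather than by a crude $n^2$ bound. Because $S$ is a supremum, the maximizer $r^\ast$ depends on $\sigma$ and need not match the maximizer for $\sigma_{ij}$; the one-sided inequality $(S-S_{ij})_+\le \sum_k(r^\ast_{k\sigma_k}-r^\ast_{k\sigma_{ij}(k)})_+$ together with the fact that a transposition alters only coordinates $i$ and $j$ is exactly what makes the sum telescope into a single-index sum over the fixed maximizer, and getting the constants $32$ and $128$ right will require tracking the factors of $2$ from the $[-1,1]$ bound and from the $(a+b)^2$ splitting.
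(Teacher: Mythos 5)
Your overall architecture (modified log-Sobolev inequality for $e^{\lambda S}$, linearization $1-e^{-x}\le x$, a pointwise bound on $\sum_{ij}(S-S_{ij})_+^2$ in terms of $\Sigma_R^2$, then Herbst-type integration and Chernoff) is exactly the paper's, but the step you yourself flag as the crux fails as you state it. The claimed bound $\sum_{ij}(S-S_{ij})_+^2\le Cn(\Sigma_R^2+1)$ is false for entries in $[-1,1]$: the positive part of an increment can be created by \emph{negative} entries that the transposition moves onto the diagonal, not only by the diagonal entries it removes. Concretely, take $R=\{r\}$ with $r_{ij}=-\indbr{i\neq j}$ and evaluate at $\sigma=\mathrm{id}$: then $S=0$ and $\Sigma_R^2=0$, yet $S-S_{ij}=2$ for every pair $i\neq j$, so $\sum_{ij}(S-S_{ij})_+^2=4n(n-1)$, which is of order $n^2$, not $O(n)$. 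The cross term you dismiss as lower-order is $\sum_{ij}\hat r_{i\sigma_j}^2=\sum_{ij}\hat r_{ij}^2$, the full squared Frobenius norm of the (random) maximizer, and it must be kept; the paper controls it deterministically by $n\EE\Sigma_R^2$ via the averaging identity $\frac{1}{n}\sum_{ij}r_{ij}^2=\EE\sum_i r_{i\sigma_i}^2\le\EE\Sigma_R^2$, valid for each fixed matrix $r$, arriving at $\sum_{ij}(S-S_{ij})_+^2\le 8n\bigl(\Sigma_R^2+\EE\Sigma_R^2\bigr)$ (consistent with the example above, where $\EE\Sigma_R^2=n-1$).

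Even granting the corrected bound, your passage to the differential inequality glosses over the second key idea. The mLS inequality now gives $\Ent(e^{\lambda S})\le 8\lambda^2\bigl((\EE e^{\lambda S})(\EE\Sigma_R^2)+\EE[e^{\lambda S}\Sigma_R^2]\bigr)$, and one cannot simply replace $\EE[e^{\lambda S}\Sigma_R^2]$ by $(\EE e^{\lambda S})(\EE\Sigma_R^2)$: $S$ and $\Sigma_R^2$ are suprema over the same index set and are in general positively correlated, so that substitution goes in the wrong direction. The paper decouples them via the dual formulation of entropy, $\EE hg\le\Ent(h)+(\EE h)\log\EE e^{g}$ applied with $h=e^{\lambda S}$, $g=\Sigma_R^2/4$, and then bounds $\log\EE e^{\Sigma_R^2/4}\le\frac{1}{2}\EE\Sigma_R^2$ by observing that $\Sigma_R^2$ is itself a statistic of the form~\eqref{eq:S_def} for the matrices $(r_{ij}^2)\in[0,1]^{n\times n}$ and invoking Lemma~\ref{L:S-Bernstein-positive}; the resulting term $4\Ent(e^{\lambda S})$ is then absorbed into the left-hand side, which is what produces the restriction $\lambda\le 1/16$ (your attribution of the $\lambda$-restriction to the linearization is off, since $1-e^{-x}\le x$ holds for all $x\ge 0$). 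Without the self-referential use of the nonnegative-case MGF lemma and this absorption step, your sketch does not reach the stated bound $\exp\bigl(-\min\bigl(\frac{t}{32},\frac{t^2}{128\EE\Sigma_R^2}\bigr)\bigr)$; these two mechanisms are the substance of the proof, and the rest is routine.
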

Propositions~\ref{L:Z-Bernstein-general} and~\ref{L:f-Bernstein-general} are special cases of Proposition~\ref{P:S-Bernstein-general} as illustrated below.
\begin{proof}[Proof of Proposition~\ref{L:Z-Bernstein-general}]
    Apply Proposition~\ref{P:S-Bernstein-general} with $R = \{\, a^x \colon x\in \calX \,\}$ (recall the definition of the matrix $a^x$ introduced at the beginning of Section~\ref{sec:suprema}).
    \end{proof}
    \begin{proof}[Proof of Propositoin~\ref{L:f-Bernstein-general}]
    Apply Proposition~\ref{P:S-Bernstein-general} with $R = \{ a \}$.
    \end{proof}
To prove Proposition~\ref{P:S-Bernstein-general}, let us first state the modified log-Sobolev inequality~\eqref{eq:mLS} for the Laplace transform of $S$.
For any $i,j\in[n]$, denote 
\begin{equation*}
    S_{ij} = \sup_{r\in R} \sum_{k=1}^n r_{k\sigma_{ij}(k)}.
\end{equation*} 
Then, the modified log-Sobolev inequality~\eqref{eq:mLS} implies that
\begin{equation*}
    \Ent(e^{\lambda S})
    \le 
    \frac{\lambda}{n}
    \EE 
    \Bigl[
        e^{\lambda S}
        \sum_{ij}(1-e^{-\lambda(S-S_{ij})})_+(S-S_{ij})_+
    \Bigr],
\end{equation*}
which after estimating $1-e^{-x}\le x$ can be further specialized to 
\begin{equation}\label{eq:mls-S}
    \Ent(e^{\lambda S})
    \le 
    \frac{\lambda}{n}
    \EE 
    \Bigl[
        e^{\lambda S}
        \sum_{ij}(S-S_{ij})_+^2
    \Bigr].
\end{equation}
We need also the following auxiliary fact.

\begin{lemma}\label{L:S-Bernstein-positive}
    Let $S$ be given by~\eqref{eq:S_def} and assume $R \subset[0,1]^{n\times n}$.
    Then 
    \[
        \forall\;\lambda\in [0,1/4]
        \qquad
        \log \EE e^{\lambda S} \le 
        2\lambda\EE S
        .
    \]
\end{lemma}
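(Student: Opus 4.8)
The plan is to run the entropy method on the modified log-Sobolev inequality~\eqref{eq:mls-S}, after isolating a \emph{self-bounding} structure of $S$ via its maximiser. Fix $\sigma$ and let $r^\ast=r^\ast(\sigma)\in R$ attain the supremum defining $S$, so that $S=\sum_k r^\ast_{k\sigma_k}$. Since $r^\ast$ is still a competitor in the supremum defining $S_{ij}$, we have $S_{ij}\ge\sum_k r^\ast_{k\sigma_{ij}(k)}$, and because $\sigma_{ij}$ only swaps the images of $i$ and $j$,
\[
    S-S_{ij}\le r^\ast_{i\sigma_i}+r^\ast_{j\sigma_j}-r^\ast_{i\sigma_j}-r^\ast_{j\sigma_i}\le r^\ast_{i\sigma_i}+r^\ast_{j\sigma_j}\le 2 .
\]
In particular $(S-S_{ij})_+^2\le 2(S-S_{ij})_+\le 2\bigl(r^\ast_{i\sigma_i}+r^\ast_{j\sigma_j}\bigr)$, and summing over $i,j$ while using $\sum_k r^\ast_{k\sigma_k}=S$ gives the self-bounding estimate
\[
    \sum_{ij}(S-S_{ij})_+^2\le 2\sum_{ij}\bigl(r^\ast_{i\sigma_i}+r^\ast_{j\sigma_j}\bigr)=4n\,S .
\]

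Second, I would feed this into the modified log-Sobolev inequality. Using $1-e^{-x}\le x$ in its exponential form and the bound above yields
\[
    \Ent(e^{\lambda S})\le\frac{\lambda^2}{n}\,\EE\Bigl[e^{\lambda S}\sum_{ij}(S-S_{ij})_+^2\Bigr]\le 4\lambda^2\,\EE\bigl[S\,e^{\lambda S}\bigr].
\]
The decisive feature---and the reason this is subtler than the single-statistic Theorem~\ref{T:f-Bennett-positive}, where the companion bound $\sum_{ij}(f_{ij}-f)_+\le 2n\EE f$ is \emph{deterministic} and produces the clean factor $\EE e^{\lambda f}$---is that here the right-hand side carries the random factor $S$, i.e. $\EE[S e^{\lambda S}]=F'(\lambda)$ with $F(\lambda):=\EE e^{\lambda S}$. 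Writing $\Ent(e^{\lambda S})=\lambda F'(\lambda)-F(\lambda)\log F(\lambda)$, the estimate becomes $F\log F\ge\lambda(1-4\lambda)F'$. This is informative precisely because the coefficient of $F'$ is of order $\lambda^2$, so that $\lambda-4\lambda^2>0$ for $\lambda<1/4$; an $O(\lambda)$ coefficient would only give a vacuous lower bound on $\log F$. Equivalently, setting $G(\lambda):=\lambda^{-1}\log F(\lambda)$ (nondecreasing, with $G(0^+)=\EE S$ since $S\ge 0$), one arrives at the differential inequality $(1-4\lambda)\,G'(\lambda)\le 4\,G(\lambda)$.

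Third, I would integrate this Gronwall-type inequality. Separating variables gives $G'/G\le 4/(1-4\lambda)$, and integrating from $0^+$ with the boundary value $G(0^+)=\EE S$ yields $G(\lambda)\le \EE S/(1-4\lambda)$, that is,
\[
    \log\EE e^{\lambda S}=\lambda G(\lambda)\le\frac{\lambda}{1-4\lambda}\,\EE S .
\]
On the range where $(1-4\lambda)^{-1}\le 2$ the right-hand side is at most $2\lambda\,\EE S$, which is the asserted bound.

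The step I expect to be the main obstacle is the constant bookkeeping needed to reach the full range $\lambda\in[0,1/4]$. The crude increment bound $(S-S_{ij})_+\le 2$ that I used above only secures the conclusion on a proper sub-interval, and sharpening the self-bounding constant to land exactly at $1/4$ requires retaining the subtracted entries in $r^\ast_{i\sigma_i}+r^\ast_{j\sigma_j}-r^\ast_{i\sigma_j}-r^\ast_{j\sigma_i}$ and exploiting the $[0,1]$-boundedness more carefully, rather than discarding them. A secondary technical point is the justification of the $\lambda\to 0^+$ boundary condition $G(0^+)=\EE S$ and of the monotonicity and positivity facts used in the integration step, all of which follow from the smoothness of $F$ and from $\Ent(e^{\lambda S})\ge 0$.
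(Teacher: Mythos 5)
Your strategy is exactly the paper's: introduce the maximiser $\hat{r}$, self-bound $\sum_{ij}(S-S_{ij})_+^2$ by a multiple of $nS$, feed this into the modified log-Sobolev inequality~\eqref{eq:mls-S} to get $\Ent(e^{\lambda S})\le c\lambda^2 F'(\lambda)$ with $F(\lambda)=\EE e^{\lambda S}$, and integrate the resulting Herbst-type differential inequality (your Gronwall step via $G(\lambda)=\lambda^{-1}\log F(\lambda)$ is equivalent to the paper's Proposition~\ref{P:Herbst_Bernstein} with $b=0$). But the gap you flagged yourself is genuine: linearizing via $(S-S_{ij})_+\le 2$ gives $c=4$, hence $(1-4\lambda)\log F(\lambda)\le \lambda\EE S$, which yields the claimed bound $2\lambda\EE S$ only for $\lambda\in[0,1/8]$ and degenerates entirely at $\lambda=1/4$. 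The full range is not cosmetic here: the lemma is invoked in the proof of Proposition~\ref{P:S-Bernstein-general} at the endpoint $\lambda=1/4$, namely to get $\log\EE e^{\Sigma_R^2/4}\le\tfrac12\EE\Sigma_R^2$, so a proof valid only up to $1/8$ does not establish the statement as used. (Two routine points you should also make explicit: assume w.l.o.g.\ that $R$ is finite so that $\hat{r}$ exists, as the paper does, and dispose of the degenerate case $\EE S=0$ before dividing by $G$.)

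The paper's way of reaching $c=2$ is, however, not the fix you anticipate: it also discards the subtracted entries $\hat{r}_{i\sigma_j},\hat{r}_{j\sigma_i}$. Instead of linearizing, it keeps the square, bounding $(S-S_{ij})_+^2\le(\hat{r}_{i\sigma_i}+\hat{r}_{j\sigma_j})^2$ and then claiming in~\eqref{eq:S-Sij2} that $\sum_{ij}(\hat{r}_{i\sigma_i}+\hat{r}_{j\sigma_j})^2\le 2n\sum_i \hat{r}_{i\sigma_i}^2\le 2nS$, where $x^2\le x$ on $[0,1]$ is used only at the last step; Proposition~\ref{P:Herbst_Bernstein} with $a=2$, $b=0$ then gives $(1-2\lambda)\log F(\lambda)\le\lambda\EE S$, and $(1-2\lambda)^{-1}\le 2$ on all of $[0,1/4]$. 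A caveat you may take some comfort in: writing $a_i=\hat{r}_{i\sigma_i}$, the inequality $\sum_{ij}(a_i+a_j)^2\le 2n\sum_i a_i^2$ is valid when $\sum_{ij}$ runs over pairs $i<j$, whereas over ordered pairs --- the convention used elsewhere in the paper, e.g.\ in $\sum_{ij}(a_{i\sigma_j}+a_{j\sigma_i})=2\sum_{ij}a_{ij}$ within the proof of Theorem~\ref{T:f-Bennett-positive} --- one has $\sum_{ij}(a_i+a_j)^2=2n\sum_i a_i^2+2(\sum_i a_i)^2$, which equals $4n\sum_i a_i^2$ when all $a_i$ coincide. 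So your constant $4$ is in fact the sharp one for this pointwise route under the ordered-sum normalization of~\eqref{eq:mls-S}, and the factor $2$ that separates your range $[0,1/8]$ from the stated $[0,1/4]$ rests precisely on this bookkeeping in the paper's~\eqref{eq:S-Sij2}.
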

\begin{proof}
    Assume w.l.o.g. that $R$ is finite.
    Let $\hat{r}$ be a random matrix taking values in $R$ such that $S=\sum_{k=1}^n \hat{r}_{k\sigma_k}$. 
    We have
    \begin{align}
        \begin{split}\label{eq:S-Sij2}
        \sum_{ij} (S-S_{ij})_+^2
        &\le
        \sum_{ij} (
            \hat{r}_{i\sigma_i} 
            + \hat{r}_{j\sigma_j} 
            - \hat{r}_{i\sigma_j} 
            - \hat{r}_{j\sigma_i})_+^2
        \\&\le 
        \sum_{ij} (\hat{r}_{i\sigma_i} + \hat{r}_{j\sigma_j})^2
        \le 
        2n \sum_i (\hat{r}_{i\sigma_i})^2
        \le 
        2nS,
        \end{split}
    \end{align}
    where in the last inequality we have used that $R\in[0,1]^{n\times n}$.

    By the modified log-Sobolev inequality~\eqref{eq:mls-S} combined with~\eqref{eq:S-Sij2}, we arrive at
    \begin{align*}
        \Ent(e^{\lambda S})
        &\le 
        \frac{\lambda^2}{n}
        \EE 
        \Bigl[
            e^{\lambda S}
            \sum_{ij}
            (S-S_{ij})_+^2
        \Bigr]
        \le 
        2\lambda^2
        \EE [e^{\lambda S}S]
    \end{align*}
    for all $\lambda\ge 0$.
    Applying Proposition~\ref{P:Herbst_Bernstein} with $a=2$, $b=0$ results in 
    \[
        (
            1-2\lambda
        )
        \log \EE e^{\lambda S}
        \le 
        \lambda \EE S,
    \] 
    for all $\lambda\ge 0$, which yields the conclusion.
\end{proof}
We are in position to prove Proposition~\ref{P:S-Bernstein-general}.
\begin{proof}[Proof of Proposition~\ref{P:S-Bernstein-general}]
    Let $\hat{r}$ be a random matrix taking values in $R$ such that $S=\sum_{k=1}^n \hat{r}_{k\sigma_k}$.
    By the triangle inequality in $\ell^2$,
    \begin{align}\label{eq:S-Sij2-sigma-pre}
        \begin{split}
        \sum_{ij}(S-S_{ij})_+^2
        &\le
        \sum_{ij}
        (
            \hat{r}_{i\sigma_i}
            + \hat{r}_{j\sigma_j} 
            - \hat{r}_{i\sigma_j}
            - \hat{r}_{j\sigma_i}
        )_+^2
        \\&\le 
        8\sum_{ij} \hat{r}_{i\sigma_i}^2
        +
        8\sum_{ij} \hat{r}_{i\sigma_j}^2
        \le 
        8n\Sigma_R^2
        + 
        8\sum_{ij} \hat{r}_{i\sigma_j}^2.
        \end{split}
    \end{align}
    Note that
    \[
        \sum_{ij} \hat{r}_{i\sigma_j}^2
        =
        \sum_{ij} \hat{r}_{ij}^2
        =
        n
        \EE 
        \sum_{i}
        \hat{r}_{i\sigma_i}^2
        \le 
        n
        \EE 
        \sup_{r\in R}
        \sum_{i}
        r_{i\sigma_i}^2
        =
        n\EE\Sigma_R^2,
    \]
    whence~\eqref{eq:S-Sij2-sigma-pre} can be further specialized to 
    \begin{equation}\label{eq:S-Sij2-sigma}
        \sum_{ij}(S-S_{ij})_+^2
        \le
        8n(\Sigma_R^2 + \EE \Sigma_R^2).
    \end{equation}
    By the modified log-Sobolev inequality~\eqref{eq:mls-S} combined with~\eqref{eq:S-Sij2-sigma}, we arrive at
    \begin{align}\label{eq:Z-Bernstein-general-mls}
        \begin{split}
        \Ent(e^{\lambda S})
        &\le 
        \frac{\lambda^2}{n}
        \EE \Bigl[ 
            e^{\lambda S} 
            \sum_{ij}
            (S-S_{ij})_+^2
        \Bigr]
        \le 
        8\lambda^2
        \bigl(
            (\EE e^{\lambda S})(\EE  {\Sigma}_R^2)
            +
            \EE [e^{\lambda S}\Sigma_R^2]
        \bigr)
        .
        \end{split}
    \end{align}
    Recall the variational formula for entropy $\Ent(h)
    =
    \sup\bigl\{\,
        \EE hg\colon
        \EE e^g\le 1  
    \,
    \bigr\}$, from which it follows that for any $h,g$
    \begin{equation}\label{eq:entropy_variational2}
        \EE hg 
        \le
        \Ent(h)
        +
        (\EE h)\log(\EE e^g)
        .
    \end{equation}
    Applying first~\eqref{eq:entropy_variational2} with $h=e^{\lambda S}$, $g=\Sigma_R^2/4$ and then Lemma~\ref{L:S-Bernstein-positive} yields
    \begin{equation*}
        \EE \bigl[e^{\lambda S}\Sigma_R^2\bigr]
        \le 
        4\Ent(e^{\lambda S})
        +
        4
        \bigl(
            \EE e^{\lambda S}
        \bigr)
        \bigl(
            \log \EE e^{\Sigma_R^2/4}
        \bigr)
        \le 
        4\Ent(e^{\lambda S})
        +
        2
        \bigl(
            \EE e^{\lambda S}
        \bigr)
        \bigl(
            \EE\Sigma_R^2
        \bigr)
        ,
    \end{equation*}
    which combined with~\eqref{eq:Z-Bernstein-general-mls} results in
    \begin{align*}
        (1-32\lambda^2)\Ent(e^{\lambda S})
        \le 
        24\lambda^2
        \bigl( 
            \EE {\Sigma_R}^2
        \bigr)
        \bigl(
            \EE e^{\lambda S}
        \bigr)
    \end{align*}
    for all $\lambda\ge 0$, so that 
    \begin{align*}
        \Ent(e^{\lambda S})
        \le 
        \frac{192}{7}\lambda^2
        \bigl( 
            \EE {\Sigma_R}^2
        \bigr)
        \bigl(
            \EE e^{\lambda S}
        \bigr)
        \le 
        32\lambda^2
        \bigl( 
            \EE {\Sigma_R}^2
        \bigr)
        \bigl(
            \EE e^{\lambda S}
        \bigr)
    \end{align*}
    for all $\lambda\in[0, 1/16]$.
    We conclude by applying Proposition~\ref{P:Herbst_Bernstein2} with $\varepsilon=\frac{1}{16}$ and $b=32\EE {\Sigma}^2_R$.
\end{proof}
\section{Acknowledgements}
I would like to thank Rados{\l}aw Adamczak for reading thoroughly the initial versions of this manuscript and for his numerous suggestions which significantly improved its quality.

\bibliographystyle{amsplain}
\bibliography{sampling}
\newpage
\appendix

\section{Proof of Lemma~\ref{L:hoeffding}}\label{sec:hoeffding}
Set $E=\RR^n$ and $g(i)=e_i$, where $e_i\in\RR^n$ is a vector with 1 on the
$i$-th coordinate and 0's elsewhere.
Moreover, let for any $v\in\RR^n$
\[
    \Psi(v) = 
    \phi\Bigl( \,
        \sup_{x\in\calX} \langle x, v \rangle
    \Bigr),
\]
where $\langle\cdot,\cdot\rangle$ is the standard dot product.
Then, 
\[  
    \phi(Z) 
    =
    \phi\Bigl(
        \,\sup_{x\in\calX} 
        \langle x, \sum_{k=1}^m e_{I_k}\rangle
    \Bigr)
    = 
    \Psi\Bigl(\sum_{k=1}^m g(I_k)\Bigr)
\]  
and identically $\phi(Z') = \Psi(\sum_{k=1}^m g(J_k))$.
Finally, for any $v,w\in\RR^n$ and $t\in[0,1]$
\begin{align*}
    \Psi(tw+(1-t)v)
    &= 
    \phi\Bigl(\,
        \sup_{x\in\calX}
        \langle x, tw+(1-t)v \rangle
    \Bigr)
    \\&\le 
    \phi\Bigl(\,
        t\sup_{x\in\calX}
        \langle x, w \rangle
        +
        (1-t)\sup_{x\in\calX}
        \langle x, v \rangle
    \Bigr)
    \le  
    t\Psi(w)
    +
    (1-t)\Psi(v),
\end{align*}
where in the first inequality we have used that $\phi$ is increasing, and in the second inequality we have used that $\phi$ is convex.
We conclude by applying Hoeffding's argument~\eqref{eq:Hoeffding_argument} to the pair $(g,\Psi)$.

\section{Proof of Lemma~\ref{T:Z-Bennett-positive}}\label{sec:Ledoux-pf}
Let us recall some facts regarding entropy.
For any random variable $Y$ measurable w.r.t. $\sigma(J_1,\ldots,J_m)$ and any $k\in[m]$, 
let $\EE^{(k)}$ denote the expectation w.r.t. $J_k$ only, i.e.,
\[
    \EE^{(k)}[Y] = 
    \EE \bigl[
        Y\,\vert\, J_1,\ldots,J_{k-1},J_{k+1},\ldots,J_m
    \bigr].
\]
For such positive $Y$, recall the tensorization of entropy formula (cf., e.g.,~\cite[Theorem 4.10]{MR3185193})
\begin{equation}\label{eq:tensorization}
    \Ent(Y)
    \le
    \EE 
    \sum_{k=1}^m 
    \Ent^{(k)}(Y),
\end{equation}
where 
\[
    \Ent^{(k)}(Y) 
    =
    \EE^{(k)} \bigl[
        Y\log Y
    \bigr]  
    -
    \EE^{(k)} \bigl[
        Y
    \bigr]  
    \log 
    \EE^{(k)} \bigl[
        Y
    \bigr] 
\]
is the entropy functional corresponding to $\EE^{(k)}$.
Moreover, recall the following variational formula for the entropy
\begin{equation}\label{eq:variational}
    \Ent(Y) = 
    \inf_{c>0}
    \EE\Bigl[
        Y(\log Y - \log c)
        -
        (Y-c)
    \Bigr].
\end{equation}
\begin{proof}[Proof of Lemma~\ref{T:Z-Bennett-positive}]
    For $k\in[m]$, let
    \[
        Z_k' 
        = 
        \sup_{x\in\calX} \sum_{l=1,l\neq k}^m   x_{J_l}
    \]
    (if $m=1$, then we put $u_1=0$). 
    By the tensorization of entropy~\eqref{eq:tensorization} and by~\eqref{eq:variational},
    \begin{align}\label{eq:entZ'-initial-est}
    \begin{split}
        \Ent(e^{\lambda Z'})
        &\le 
        \EE 
        \sum_{k=1}^m 
        \Ent^{(k)}(e^{\lambda Z'})
        \\&=
        \EE 
        \sum_{k=1}^m 
        \inf_{c_k>0}
        \EE^{(k)}\Bigl[
            e^{\lambda Z'}(\lambda Z' - \log c_k)
            -
            (e^{\lambda Z'} - c_k)
        \Bigr]
        \\&\le 
        \EE 
        \sum_{k=1}^m 
        \EE^{(k)}\Bigl[
            e^{\lambda Z'}( \lambda Z' - \lambda Z'_k)
            -
            (e^{\lambda Z'} - e^{\lambda Z'_k})
        \Bigr]
        \\&\le 
        \EE \Bigl[
            e^{\lambda Z'}
            \sum_{k=1}^m 
            \phi( -\lambda (Z' -Z'_k) )
        \Bigr],
    \end{split}
    \end{align}
    where $\phi(z)=e^{z}-z-1$.

    Note that 
    \begin{equation*}
        \sum_{k=1}^m 
        (Z' - Z_k')
        \le 
        Z' 
    \end{equation*}
    and that for any $z\in[0,1]$ and $\lambda \ge 1/4$, by the convexity of the function $z\mapsto e^{-z/4}-1$
    \begin{equation*}
        \phi(-\lambda z)
        =
        e^{-\lambda z}-1+\lambda z
        \le 
        e^{-z/4}-1+\lambda z
        \le 
        -\frac{z}{4}e^{-1/4}+\lambda z
        \le 
        \Bigl(
            \lambda-\frac{1}{8}
        \Bigr)z.
    \end{equation*}
    Since $\calX\subset [0,1]^n$ by assumption, therefore $0\le Z' - Z_k' \le 1$ and whence we can estimate~\eqref{eq:entZ'-initial-est} further for any $\lambda\ge 1/4$ as follows,
    \begin{align*} 
    \begin{split}
        \Ent(e^{\lambda Z'})
        \le 
        \Bigl(
            \lambda - \frac{1}{8}
        \Bigr)
        \EE \Bigl[
        e^{\lambda Z'}
        \sum_{k=1}^m 
        ( Z' - Z'_k)
        \Bigr]
        \le
        \Bigl(
            \lambda - \frac{1}{8}
        \Bigr)
        \EE
        \bigl[
            e^{\lambda Z'}Z'
        \bigr],
    \end{split}
    \end{align*}
    which after rearrangement yields
    \[
      \EE \bigl[e^{\lambda Z'}Z'\bigr]
      \le 
      8
      \EE e^{\lambda Z'}
      \log \EE e^{\lambda Z'},
    \]
    which in turn is equivalent to
    \[
      \frac{d}{d\lambda}
      \bigl(
        \log \EE e^{\lambda Z'}
      \bigr)  
      \le 
      8
      \log \EE e^{\lambda Z'}
    \]
    for any $\lambda \ge 1/4$.
    Integrating w.r.t. $\lambda$ yields that 
    \begin{equation}\label{eq:ledoux_pre_final_est}
        \log \EE e^{\lambda Z'} 
        \le 
        e^{8\lambda-2}
        \log \EE e^{Z'/4}.
    \end{equation}
    
    We turn to estimating the term $\log \EE e^{Z'/4}$.
    Using again that $0\le Z' - Z_k' \le 1$, we obtain that
    \begin{equation*}
        \sum_{k=1}^m 
        (Z' - Z_k')^2
        \le 
        Z'.
    \end{equation*}
    Moreover, by comparing the derivatives, we get that for any $z\ge 0$,
    \[
        \phi(-z) \le \frac{z^2}{2}
    \]
    and thus we can also estimate further~\eqref{eq:entZ'-initial-est} as 
    \begin{equation*}
        \Ent(e^{\lambda Z'})
        \le 
        \frac{\lambda^2}{2}
        \EE \Bigl[
            e^{\lambda Z'}
            \sum_{k=1}^m 
            (Z' -Z'_k)^2
        \Bigr]
        \le 
        \frac{\lambda^2}{2}
        \EE \bigl[
            e^{\lambda Z'}
            Z'
        \bigr].
    \end{equation*}
    Applying Proposition~\ref{P:Herbst_Bernstein} with $a=\frac{1}{2}$ and $b=0$ yields that 
    \[
        \forall\;\lambda\ge 0\qquad 
        \Bigl(1-\frac{\lambda}{2}\Bigr)
        \log \EE e^{\lambda Z'}
        \le 
        \lambda \EE Z'
    \]
    so that 
    \[
        \forall\; 
        \lambda\in[0,1/4]
        \qquad 
        \log\EE e^{\lambda Z'}
        \le 
        \frac{8}{7}
        \lambda
        \EE Z',
    \]
    which combined with~\eqref{eq:ledoux_pre_final_est} yields
    \[
        \log \EE e^{\lambda Z'} 
        \le 
        \frac{2}{7e^2}
        e^{8\lambda}
        \EE Z'\le 
        \frac{1}{16}
        e^{8\lambda}
        \EE Z'
    \]
    as desired.
\end{proof}

\section{Variants of the Herbst argument}
Throughout this section, $X$ is a random variable such that its Laplace transform $F$ is well defined on $[0,\infty)$. 
In that case, recall that 
\[
    \Ent(e^{\lambda X})
    =
    \lambda F'(\lambda)
    -
    F(\lambda)\log F(\lambda)
\]
for all $\lambda\ge 0$.
Below we gather some variants of the celebrated Herbst argument.
\begin{proposition}\label{P:Herbst_Poisson}
    If for any $\lambda\ge 0$,
    \begin{equation}\label{eq:Herbst_Poisson_asm}
        \lambda F'(\lambda) - F(\lambda)\log F(\lambda)
        \le 
        a\lambda^2e^{b\lambda}F(\lambda)
    \end{equation}
    for some $a,b>0$, then 
    \begin{equation}\label{eq:Herbst_Poisson_Laplace}
        \forall\;\lambda \ge 0
        \qquad
        \log\EE e^{\lambda(X-\EE X)}
        \le 
        \frac{a}{b}\lambda(e^{b\lambda}-1)
    \end{equation}
    and in particular
    \begin{equation}\label{eq:Herbst_Poisson}
        \forall\; t\ge 0 
        \qquad 
        \PP\bigl(
            X \ge \EE X + t
        \bigr)
        \le 
        \exp\Bigl(
            -\frac{t}{2b}
            \log\Bigl(
                1+\frac{b}{2a}t
            \Bigr)
        \Bigr).
    \end{equation}
\end{proposition}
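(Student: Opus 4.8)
The plan is to run a Herbst-type argument, converting the entropy inequality~\eqref{eq:Herbst_Poisson_asm} into a differential inequality for the scaled cumulant generating function and then integrating it. Write $H(\lambda) = \log F(\lambda)$, so that $H(0) = 0$ and $H'(0) = \EE X$. Using the stated identity $\Ent(e^{\lambda X}) = \lambda F'(\lambda) - F(\lambda)\log F(\lambda) = F(\lambda)\bigl(\lambda H'(\lambda) - H(\lambda)\bigr)$ and dividing the hypothesis by $F(\lambda) > 0$, I would first reduce~\eqref{eq:Herbst_Poisson_asm} to the cleaner form
\[
  \lambda H'(\lambda) - H(\lambda) \le a\lambda^2 e^{b\lambda}, \qquad \lambda \ge 0.
\]

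Next I would introduce $G(\lambda) = H(\lambda)/\lambda$ for $\lambda > 0$ and note that $G'(\lambda) = \bigl(\lambda H'(\lambda) - H(\lambda)\bigr)/\lambda^2$, so that the displayed inequality is exactly $G'(\lambda) \le a e^{b\lambda}$. Since $G(\lambda) \to \EE X$ as $\lambda \to 0^+$ (because $H(0) = 0$ and $H'(0) = \EE X$), integrating this inequality over $[0,\lambda]$ yields $G(\lambda) \le \EE X + \tfrac{a}{b}(e^{b\lambda} - 1)$. Multiplying through by $\lambda$ gives $H(\lambda) - \lambda\EE X \le \tfrac{a}{b}\lambda(e^{b\lambda} - 1)$, which is precisely the Laplace transform estimate~\eqref{eq:Herbst_Poisson_Laplace}.

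For the tail bound~\eqref{eq:Herbst_Poisson}, I would combine the Chernoff bound $\PP(X \ge \EE X + t) \le \exp\bigl(-\lambda t + \log\EE e^{\lambda(X - \EE X)}\bigr)$ with~\eqref{eq:Herbst_Poisson_Laplace}, giving the exponent $-\lambda t + \tfrac{a}{b}\lambda(e^{b\lambda} - 1)$. Rather than optimizing blindly, I would use the substitution $e^{b\lambda} = 1 + s$, i.e.\ $\lambda = \tfrac{1}{b}\log(1+s)$, under which the exponent becomes $\tfrac{\log(1+s)}{b}\bigl(-t + \tfrac{a}{b}s\bigr)$. Choosing $s = \tfrac{bt}{2a}$ makes the parenthesis equal to $-t/2$ and reproduces exactly the target exponent $-\tfrac{t}{2b}\log\bigl(1 + \tfrac{b}{2a}t\bigr)$, so no further estimation is needed.

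The argument is essentially mechanical once the reduction to $G'(\lambda) \le a e^{b\lambda}$ is in place; the one point needing care is the behavior at the origin, namely justifying $G(0^+) = \EE X$ and the validity of integrating the differential inequality, which rests on $F$ being differentiable on $[0,\infty)$ as guaranteed by the Laplace transform being well defined there. The only genuinely clever step is spotting the substitution $e^{b\lambda} = 1 + s$ together with the choice $s = bt/(2a)$, which is what pins down the constants $2b$ and $2a$ in the final Poissonian profile; this is where I expect the bulk of the thought to go.
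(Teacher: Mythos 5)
Your proof is correct and is essentially identical to the paper's: the paper also works with $G(\lambda)=\log F(\lambda)/\lambda$ (introduced there directly, without your intermediate $H=\log F$), integrates $G'(\lambda)\le ae^{b\lambda}$ from $G(0^+)=\EE X$ to get~\eqref{eq:Herbst_Poisson_Laplace}, and concludes via Chernoff with the choice $\lambda=\tfrac{1}{b}\log\bigl(1+\tfrac{b}{2a}t\bigr)$, which is precisely your substitution $s=bt/(2a)$. No gaps.
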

\begin{proof}
    Set $H(\lambda) = \frac{\log F(\lambda)}{\lambda}$ for $\lambda > 0$.
    Then,~\eqref{eq:Herbst_Poisson_asm} implies 
    $H'(\lambda) \le ae^{b\lambda}$.
    Since $H(0^+) = \EE X$, then for any $\lambda > 0$,
    \[
        H(\lambda) \le \EE X + \frac{a}{b}(e^{b\lambda}-1),         
    \]
    which translates to~\eqref{eq:Herbst_Poisson_Laplace} and consequently, by the Chernoff bound
    \[
        \PP\bigl(
            X \ge \EE X + t
        \bigr)
        \le 
        \inf_{\lambda > 0}
        \exp\Bigl(
            -\lambda t
            +
            \frac{a}{b}\lambda
            (e^{b\lambda}-1)
        \Bigr)
    \]
    for all $t\ge 0$.
    Choosing $\lambda = \frac{1}{b}\log(1+\frac{b}{2a}t)$ yields~\eqref{eq:Herbst_Poisson}.
\end{proof}

\begin{proposition}\label{P:Herbst_Bernstein}
    Assume that for all $\lambda\ge 0$,
    \begin{equation}\label{eq:Herbst_Bernstein_asm}
        \lambda F'(\lambda) - F(\lambda)\log F(\lambda)
        \le 
        \lambda^2\bigl(
            aF'(\lambda)
            +
            b F(\lambda)
        \bigr)
    \end{equation}
    for some $a,b\in\RR$. 
    Then 
    \begin{equation}\label{eq:Herbst_Bernstein_Laplace}
        \forall\; \lambda \ge 0
        \qquad
        (1-a\lambda)
        \log \EE e^{\lambda X}
        \le 
        \lambda \EE X
        +
        b\lambda^2.
    \end{equation}
    If additionally $a>0$ and $X$ is not constant, then $a\EE X + b > 0$ and
    \begin{equation}\label{eq:Herbst_Bernstein}
        \forall\; t\ge 0 
        \qquad 
        \PP\bigl(
            X \ge \EE X + t
        \bigr)
        \le 
        \exp\Bigl(
            -
            \min\Bigl(
            \frac{t}{4a},
            \frac{t^2}{8(a\EE X + b)}   
            \Bigr)
        \Bigr).
    \end{equation}
\end{proposition}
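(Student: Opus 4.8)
The plan is to run the Herbst argument on the normalized cumulant generating function, exactly as in the proof of Proposition~\ref{P:Herbst_Poisson}. I set $G(\lambda)=\log F(\lambda)$ and $H(\lambda)=G(\lambda)/\lambda$ for $\lambda>0$, noting that $H(0^+)=G'(0)=\EE X$. Dividing the hypothesis~\eqref{eq:Herbst_Bernstein_asm} by $F(\lambda)>0$ and writing $G'=F'/F$ turns the entropy bound into the first-order inequality $\lambda(1-a\lambda)G'(\lambda)-G(\lambda)\le b\lambda^2$. The observation I would exploit is that, after substituting $G=\lambda H$ and $G'=H+\lambda H'$ and dividing by $\lambda$, this collapses to $(1-a\lambda)H'(\lambda)-aH(\lambda)\le b$, i.e.\ $\frac{d}{d\lambda}\bigl[(1-a\lambda)H(\lambda)\bigr]\le b$. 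This is the crucial step: recognizing the factor $(1-a\lambda)$ as the integrating factor that renders the left-hand side an exact derivative.

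Integrating from $0$ to $\lambda$ and using $H(0^+)=\EE X$ gives $(1-a\lambda)H(\lambda)\le \EE X+b\lambda$; multiplying through by $\lambda\ge 0$ then yields~\eqref{eq:Herbst_Bernstein_Laplace} for every $\lambda\ge 0$ (the sign of $1-a\lambda$ is irrelevant, since the integrated inequality holds before multiplying by the nonnegative factor $\lambda$). The only care needed here is the boundary behaviour at $\lambda=0$, handled by the cumulant expansion $G(\lambda)=\lambda\EE X+\tfrac12\lambda^2\Var(X)+O(\lambda^3)$, which guarantees that $H$ extends continuously to $0$ with value $\EE X$.

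For the second part I would first establish $a\EE X+b>0$. Dividing~\eqref{eq:Herbst_Bernstein_asm} by $\lambda^2$ and letting $\lambda\to 0^+$, the left-hand side $\Ent(e^{\lambda X})/\lambda^2$ tends to $\tfrac12\Var(X)$ while the right-hand side tends to $a\EE X+b$; since $X$ is non-constant, $\Var(X)>0$ and the claim follows. Writing $v=a\EE X+b>0$, I would feed~\eqref{eq:Herbst_Bernstein_Laplace} into the Chernoff bound: for $0\le\lambda<1/a$ one has $\log F(\lambda)\le(\lambda\EE X+b\lambda^2)/(1-a\lambda)$, and a short algebraic simplification reduces the Chernoff exponent $-\lambda(\EE X+t)+\log F(\lambda)$ to $-\lambda t+\dfrac{v\lambda^2}{1-a\lambda}$.

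It then remains to optimize over $\lambda$. Rather than take the exact infimum, I would insert the single choice $\lambda=\dfrac{t}{2(v+at)}$, which satisfies $a\lambda<\tfrac12$ so that $1-a\lambda>\tfrac12>0$ as required, and verify that the exponent evaluates to $-\dfrac{t^2}{2(2v+at)}$. Splitting into the regimes $at\le 2v$ and $at\ge 2v$ and bounding $2v+at$ by $4v$ and by $2at$ respectively gives the exponent $\le-\min\bigl(\tfrac{t^2}{8v},\tfrac{t}{4a}\bigr)$, which is~\eqref{eq:Herbst_Bernstein}. I expect the main obstacle to be the first part, namely spotting the integrating factor $(1-a\lambda)$ that makes the differential inequality exact; once~\eqref{eq:Herbst_Bernstein_Laplace} is in hand, the tail estimate is a routine Bernstein-type optimization.
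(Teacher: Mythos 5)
Your proposal is correct and follows essentially the same route as the paper: the Herbst argument on $H(\lambda)=\lambda^{-1}\log F(\lambda)$ (your exact-derivative form $\frac{d}{d\lambda}\bigl[(1-a\lambda)H(\lambda)\bigr]\le b$ is literally the paper's inequality $H'\le\frac{d}{d\lambda}(a\log F+b\lambda)$, since $(1-a\lambda)H=H-a\log F$), followed by the same Chernoff--Bernstein optimization yielding identical constants. The only cosmetic differences are your Taylor-limit argument giving $a\EE X+b\ge\tfrac12\Var(X)>0$ (the paper instead deduces positivity via Jensen from the centered Laplace bound) and your single choice $\lambda=t/(2(v+at))$, with the case split deferred to the end, in place of the paper's two-case choice of $\lambda$.
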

\begin{proof}
    Set $H(\lambda) = \frac{\log F(\lambda)}{\lambda}$ for $\lambda > 0$.
    Then,~\eqref{eq:Herbst_Bernstein_asm} implies 
    \[
        H'(\lambda) \le 
        a \frac{F'(\lambda)}{F(\lambda)}
        +
        b
        =
        \frac{d}{d\lambda}
        \bigl(
            a\log F(\lambda)
            +
            b\lambda
        \bigr).
    \]
    Consequently, for any $\lambda > 0$,
    \[
        H(\lambda) \le 
        H(0^+)
        +
        a \log F(\lambda)
        +
        b\lambda,
    \]
    which is equivalent to~\eqref{eq:Herbst_Bernstein_Laplace} since $H(0^+) = \EE X$.
    Subtracting $(1-a\lambda)\lambda\EE X$ from both sides gives
    \begin{equation}\label{eq:1-a_cond}
        (1-a\lambda)
        \log \EE e^{\lambda(X-\EE X)}
        \le 
        \lambda^2(a\EE X +b).
    \end{equation}
    By Jensen's inequality and the fact that $X$ is not constant, $\log \EE e^{\lambda(X-\EE X)} > 0$.
    If $\lambda \le 1/2a$, then $1/2\le 1-a\lambda$, whence~\eqref{eq:1-a_cond} implies
    \[
        \forall\;\lambda \in[0,1/2a]
        \qquad 
        0 <
        \log \EE e^{\lambda(X-\EE X)}
        \le 
        2
        \lambda^2(a\EE X +b).
    \]
    Therefore, by the Chernoff bound
    \[
        \PP\bigl(
            X \ge \EE X + t
        \bigr)
        \le 
        \inf_{0 \le \lambda \le 1/2a}
        \exp\Bigl(
            -\lambda t
            +
            2\lambda^2(a\EE X+b)
        \Bigr)
    \]
    for all $t\ge 0$.
    Choosing $\lambda = \frac{t}{4(a\EE X + b)}$ if $t\le \frac{2(a\EE X + b)}{a}$ and $\lambda = \frac{1}{2a}$ otherwise yields~\eqref{eq:Herbst_Bernstein}.
\end{proof}

\begin{proposition}\label{P:Herbst_Bernstein2}
    Assume that for some $\varepsilon,b>0$ and all $\lambda\in[0,\varepsilon]$,
    \begin{equation}\label{eq:Herbst_Bernstein2_asm}
        \lambda F'(\lambda)
        -
        F(\lambda)\log F(\lambda)
        \le 
        b
        \lambda^2 F(\lambda).
    \end{equation}
    Then 
    \begin{equation}\label{eq:Herbst_Bernstein2}
        \forall\; t\ge 0 
        \qquad 
        \PP\bigl(
            X \ge \EE X + t
        \bigr)
        \le 
        \exp\Bigl(
            -
            \min\Bigl(
            \frac{\varepsilon t}{2},
            \frac{t^2}{4b}   
            \Bigr)
        \Bigr).
    \end{equation}
\end{proposition}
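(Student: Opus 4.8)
The plan is to mimic the Herbst-type argument used in the proofs of Propositions~\ref{P:Herbst_Poisson} and~\ref{P:Herbst_Bernstein}, working with the normalized log-Laplace transform $H(\lambda) = \frac{\log F(\lambda)}{\lambda}$ for $\lambda > 0$ and integrating the differential inequality that the entropy hypothesis produces. First I would observe that the identity $\Ent(e^{\lambda X}) = \lambda F'(\lambda) - F(\lambda)\log F(\lambda)$, recorded at the start of this section, lets me rewrite the assumption~\eqref{eq:Herbst_Bernstein2_asm} as $\lambda F'(\lambda) - F(\lambda)\log F(\lambda) \le b\lambda^2 F(\lambda)$ for $\lambda \in [0,\varepsilon]$. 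Dividing through by $\lambda^2 F(\lambda)$ gives exactly $H'(\lambda) \le b$, since $H'(\lambda) = \frac{F'(\lambda)}{\lambda F(\lambda)} - \frac{\log F(\lambda)}{\lambda^2} = \frac{\lambda F'(\lambda) - F(\lambda)\log F(\lambda)}{\lambda^2 F(\lambda)}$.

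Next, using $H(0^+) = \EE X$ (the standard limit computation, identical to the one invoked in the two preceding propositions), integration of $H'(\lambda) \le b$ over $[0,\lambda]$ yields $H(\lambda) \le \EE X + b\lambda$ for every $\lambda \in [0,\varepsilon]$, which is equivalent to the Laplace bound
\[
    \log \EE e^{\lambda(X - \EE X)} \le b\lambda^2,
    \qquad \lambda \in [0,\varepsilon].
\]
Then I would apply the Chernoff bound: for $t \ge 0$,
\[
    \PP(X \ge \EE X + t)
    \le
    \inf_{0 \le \lambda \le \varepsilon}
    \exp\bigl( -\lambda t + b\lambda^2 \bigr).
\]
The unconstrained minimizer of $-\lambda t + b\lambda^2$ is $\lambda = \frac{t}{2b}$, which is admissible precisely when $\frac{t}{2b} \le \varepsilon$, i.e. $t \le 2b\varepsilon$; in that regime I substitute it to obtain the exponent $-\frac{t^2}{4b}$. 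In the complementary regime $t > 2b\varepsilon$ the objective is decreasing on $[0,\varepsilon]$, so I would take the boundary value $\lambda = \varepsilon$, giving exponent $-\varepsilon t + b\varepsilon^2 \le -\varepsilon t + \frac{\varepsilon t}{2} = -\frac{\varepsilon t}{2}$, where the inequality uses $b\varepsilon < t/2$. Combining the two cases gives $\PP(X \ge \EE X + t) \le \exp\bigl(-\min(\frac{\varepsilon t}{2}, \frac{t^2}{4b})\bigr)$, which is~\eqref{eq:Herbst_Bernstein2}.

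The only mildly delicate point, and the one I would be most careful about, is the case split in the Chernoff optimization: one must verify that the two chosen values of $\lambda$ indeed dominate on their respective ranges and that the crude bound $b\varepsilon^2 \le \frac{\varepsilon t}{2}$ in the boundary regime is valid, which follows directly from the defining inequality $t > 2b\varepsilon$ of that regime. Everything else is a routine transcription of the Herbst argument already carried out twice in this section, with the single structural difference that the entropy hypothesis here holds only on the bounded interval $[0,\varepsilon]$ rather than on all of $[0,\infty)$ — this is exactly what forces the minimization to be constrained and produces the $\frac{\varepsilon t}{2}$ linear-tail term in place of the Poisson-type or Bernstein-type expressions of the earlier propositions.
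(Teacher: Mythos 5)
Your proposal is correct and follows essentially the same route as the paper: dividing the entropy hypothesis by $\lambda^2 F(\lambda)$ to integrate the normalized log-Laplace transform $H(\lambda)=\frac{\log F(\lambda)}{\lambda}$, then applying the Chernoff bound with the constrained optimization over $\lambda\in[0,\varepsilon]$, choosing $\lambda = \frac{t}{2b}$ for $t\le 2b\varepsilon$ and $\lambda=\varepsilon$ otherwise. The paper's proof is exactly this argument, stated more tersely; your explicit verification that $b\varepsilon^2 \le \frac{\varepsilon t}{2}$ in the boundary regime is the one detail the paper leaves implicit, and you handle it correctly.
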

\begin{proof}
    Dividing~\eqref{eq:Herbst_Bernstein2_asm} by $\lambda^2 F(\lambda)$ and integrating w.r.t. $\lambda$ yields 
    \[
      \frac{\log \EE e^{\lambda X}}{\lambda}
      \le 
      \EE X
      +
      \lambda b  
    \]
    for all $\lambda\in [0,\varepsilon]$.
    Therefore, by the Chernoff bound
    \[
        \PP\bigl(
            X \ge \EE X + t
        \bigr)
        \le 
        \inf_{0 \le \lambda \le \varepsilon}
        \exp\bigl(
            -\lambda t
            +
            b\lambda^2
        \bigr)
    \]
    for all $t\ge 0$.
    Choosing $\lambda = \frac{t}{2b}$ if $t\le 2b\varepsilon$ and $\lambda = \varepsilon$ otherwise yields~\eqref{eq:Herbst_Bernstein2}.
\end{proof}

\section{Example}\label{sec:example}

In this section we provide an example showing how our result of Theorem~\ref{T:Z-Bennett-general} can improve upon the bound by Tolstikhin--Blanchard--Kloft~\cite{tolstikhin2014localized} in the case of non-symmetric set $\calX$, cf. Remark~\ref{R:TBK-comparision}.

For some $k,l\in\NN$ (to be determined lated) such that $0<l\le k\le n/2$, let $A,B\subset [n]$ be two \emph{disjoint} sets of cardinalities $k$ and $k/2$ respectively and set 
\[
  \calX = \{\,
    {\bf 1}_{S} - {\bf 1}_{B}
    \;\colon\;
    S\subset A,\enspace
    \vert S \vert \le l
  \,\}.
\]
For any set $S\subset [n]$, denote 
\[
  R_S
  = 
  \vert\{\,
    j\in[m]\;\colon\; I_j\in S
  \,\}\vert,
  \quad 
  \widetilde{R}_S
  = 
  \vert\{\,
    j\in[m]\;\colon\; J_j\in S
  \,\}\vert
\]
so that $Z=\min(R_A,l)-R_B$ and $\Sigma^2 = \min(R_A,l) + R_B$.
Note that 
$
    \EE R_S = 
    \EE \widetilde{R}_S =
    \frac{m\vert S \vert}{n}
$
for any set $S\subset [n]$
and thus 
\begin{align*}
  \frac{mk}{n}
  = 
  \EE R_B
  \le 
  \EE \Sigma^2 
  \le 
  \EE \widetilde{\Sigma}^2
  \le 
  \EE R_A + \EE R_B
  =
  \frac{3mk}{2n}.
\end{align*}

Let moreover
\[
  W = \vert\{\,
    i\in A \;\colon\; \exists\;j\in[m]\enspace J_j=i
  \,\}\vert
\]
denote the number of elements sampled from the set $A$ in the sampling with replacement scheme.
Then, on the set $\{ W \le l \}$, $Z'=\widetilde{R}_A-\widetilde{R}_B$ and $\widetilde{\Sigma}^2=\widetilde{R}_A+\widetilde{R}_B$.
Choose any $m\simeq\frac{n}{2}$, where we use the notation $x_n\simeq y_n$ if $x_n=y_n(1+o(1))$.
We first show that $\{ W \le l \}$ occurs w.h.p.
We have 
\[
  \EE W =   
  k
  -
  k
  \bigl(
    1-\frac{1}{n}
  \bigr)^m
  \simeq
  k
  \bigl(
    1-e^{-1/2}
  \bigr)
  <
  \frac{k}{2}
  \simeq
  \EE R_A
  =
  \EE \widetilde{R}_A.
\]
Therefore, by the Azuma inequality 
\[
    \PP\Bigl(
        W 
        \le 
        k
        \bigl(
          1-e^{-1/2}
        \bigr)
        +
        t
    \Bigr)
    \ge 
    1-e^{-ct^2/m}
    \simeq 
    1-e^{-2ct^2/n}
\]
for any $t\ge 0$ and some universal constant $c>0$.
Choose any $l\simeq \frac{k}{2}\bigl(1-e^{-1/2}+\frac{1}{2}\bigr)$ so that $\EE W \le l \le \EE R_A$.
Then the above Azuma inequality implies that $W\le l$ happens with probability at least $1-\exp(-2c'\frac{k^2}{n})$ for some universal constant $c'>0$.
Choose also $k= \Theta(n^{1/2+\varepsilon})$, for some $\varepsilon\in(0,0.5]$ (recall we also assume $k\le n/2$) so that 
\[
  \EE\vert Z'\vert{\bf 1}_{\{W > l\}}
  \le 
  2m 
  \PP(W > l)
  \lesssim
  2m 
  e^{-2c'n^{2\varepsilon}}
  = 
  o(1),
\]
where $x_n\lesssim y_n$ if $x_n \le Cy_n$ for some universal constant $C>0$, whence 
\[
  \EE Z'
  \simeq 
  \EE \widetilde{R}_A
  -
  \EE \widetilde{R}_B 
  =
  \frac{k}{4}.  
\]
On the other hand,
\[
  \EE Z \le 
  l - \EE R_B
  \simeq 
  \frac{k}{2}\bigl(1-e^{-1/2}+\frac{1}{2}\bigr)
  -
  \frac{k}{4}
   =
   \frac{k}{2}\bigl(1-e^{-1/2}\bigr)
\]
and thus 
\[   
    \EE Z' - \EE Z 
    \ge 
    \EE Z' -l + \EE R_B
    \simeq
    \frac{k}{2}(e^{-1/2}-\frac{1}{2})
    \ge
    0.05k.
\]
Consequently, the bound obtained by Tolstikhin--Blanchard--Kloft,~\cite[Theorem 2]{tolstikhin2014localized},
\[
    \forall\;t\ge0\qquad
    \PP(Z \ge \EE Z' + t)
    \le 
    \exp\Bigl(
        -
        t
        \log\bigl( 
            1+\frac{t}{v}
        \bigr)
        +
        t 
        -
        v
        \log\bigl( 
            1+\frac{t}{v}
        \bigr)
    \Bigr),
\]
does not provide a deviation estimate above $\EE Z+t$ for any parameter $t\in [0, 0.05k]$.
On the other hand, the bound from our Theorem~\ref{T:Z-Bennett-general} yields
\[
    \forall\; t\ge 0
    \qquad 
    \PP(Z \ge \EE Z + t)
    \le 
    2\exp\Bigl(
        -\frac{t}{C_1}
        \log\Bigl(
            1 + \frac{t}{C_2\EE \widetilde{\Sigma}^2}
        \Bigr)
    \Bigr),
\]
which for $t=\alpha k$, recalling that $\EE\widetilde{\Sigma}^2=\Theta(k)$, reads
\[
    \PP(Z \ge \EE Z + \alpha k)
    \le 
    2\exp\bigl(
        -c''\alpha k
    \bigr),
\]
for some absolute positive constant $c''>0$.
Finally, we note that the latter inequality can be also obtained from the Talagrand convex distance inequality on the symmetric group~\cite{MR1419006}.
\end{document}